\theoremstyle{ams}
\newtheorem{theorem}{Theorem}[section]
\newtheorem{proposition}[theorem]{Proposition}
\newtheorem{lemma}[theorem]{Lemma}
\newtheorem{corollary}[theorem]{Corollary}
\theoremstyle{definition}
\newtheorem{definition}[theorem]{Definition}
\newtheorem{remark}[theorem]{Remark}
\newtheorem{example}[theorem]{Example}
\newcommand{\C}{\mathbb{C}}
\newcommand{\R}{\mathbb{R}}
\newcommand{\Z}{\mathbb{Z}}
\newcommand{\CP}{\mathbb{C}P}
\newcommand{\red}{\mathrm{red}}
\begin{document}
\title[Semifree circle actions on symplectic manifolds]
{Semifree Hamiltonian circle actions on 6-dimensional symplectic manifolds with non-isolated fixed point set}

\author[Y. Cho]{Yunhyung Cho}
\address{School of mathematics, KIAS, 85 Heogiro, Dongdaemun Gu, Seoul 130-722, Korea}
\email{yhcho@kias.re.kr}

\author[T. Hwang]{Taekgyu Hwang}
\address{Department of Mathematical Sciences, KAIST, 335 Gwahangno, Yu-sung Gu, Daejeon 305-701, Korea}
\email{hwangtaekkyu@kaist.ac.kr}

\author[D. Y. Suh]{Dong Youp Suh}
\address{Department of Mathematical Sciences, KAIST, 335 Gwahangno, Yu-sung Gu, Daejeon 305-701, Korea}
\email{dysuh@math.kaist.ac.kr}

\thanks{The third author was supported in part by Basic Science Research Program through the National Research Foundation of Korea(NRF) grant funded by the Ministry of Education(2013R1A1A2007780)}

\keywords{symplectic manifold, circle action, Hamiltonian action, moment map}

\maketitle

\begin{abstract}
	Let $(M, \omega)$ be a $6$-dimensional closed symplectic manifold with a symplectic $S^1$-action with $M^{S^1} \neq \emptyset$ and $\dim M^{S^1} \leq 2$. Assume that $\omega$ is integral with a generalized moment map $\mu$. We first prove that the action is Hamiltonian if and only if $b_2^+(M_{\red})=1$, where $M_{\red}$ is any reduced space with respect to $\mu$. It means that if the action is non-Hamiltonian, then $b_2^+(M_{\red}) \geq 2$. Secondly, we focus on the case when the action is semifree and Hamiltonian. We prove that if $M^{S^1}$ consists of surfaces, then the number $k$ of fixed surfaces with positive genera is at most four. In particular, if the extremal fixed surfaces are spheres, then $k$ is at most one. Finally, we prove that $k \neq 2$ and we construct some examples of 6-dimensional semifree Hamiltonian $S^1$-manifolds such that $M^{S^1}$ contains $k$ surfaces of positive genera for $k = 0$ and $4$. Examples with $k=1$ and $3$ were given in \cite{L2}.

\end{abstract}

\section{Introduction}

	Let $(M^{2n},\omega)$ be a $2n$-dimensional closed symplectic manifold with a symplectic $S^1$ action. Many mathematicians tried to find some conditions on $M$ which make a symplectic circle action Hamiltonian. One easy condition is that $M$ is simply connected. Then any closed $1$-form is exact, so any symplectic circle action is Hamiltonian. In 1959, Frankel discovered a condition in terms of the fixed point set in the K\"{a}hler category. The following theorem will be referred as Frankel's theorem throughout. Since any Hamiltonian action on the compact space has a fixed point, the only if part is trivial.

\begin{theorem}\cite{F}
	Assume that $\omega$ is a K\"{a}hler form and the circle action is holomorphic. Then the symplectic $S^1$ action is Hamiltonian if and only if it has fixed points.
\end{theorem}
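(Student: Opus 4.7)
The plan is as follows. The ``only if'' direction is immediate on a compact manifold, since a moment map attains its extrema and those critical points are fixed by the action. The substantive content is therefore the ``if'' direction: I must show that a holomorphic symplectic $S^{1}$-action on a compact K\"ahler manifold with at least one fixed point is automatically Hamiltonian.

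Let $X$ be the vector field generating the action. Because the action preserves both $\omega$ and $J$, it preserves the K\"ahler metric $g(\cdot,\cdot)=\omega(\cdot,J\cdot)$, so $X$ is a holomorphic Killing field. The $1$-form $\eta:=\iota_{X}\omega$ is closed by Cartan's formula, and $J$-invariance of $\omega$ gives the identification $\eta(Y)=g(JX,Y)$. I would attack exactness of $\eta$ via Hodge theory on $(M,g)$: writing $\eta=dH+\alpha$ with $\alpha$ harmonic, it suffices to show that $\eta$ is $L^{2}$-orthogonal to every harmonic $1$-form. The K\"ahler input then enters through the Hodge decomposition $H^{1}(M,\C)=H^{1,0}(M)\oplus H^{0,1}(M)$, which lets me write any real harmonic $1$-form as $\alpha=\beta+\overline{\beta}$ with $\beta$ a holomorphic $(1,0)$-form.

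The crux of the argument is the observation that $\beta(X)$ is a holomorphic function on the compact complex manifold $M$---indeed the $(1,0)$-part of the real holomorphic vector field $X$ is a holomorphic vector field, and its pairing with a holomorphic $1$-form yields a global holomorphic function---hence constant; the hypothesis that $X$ vanishes at some point forces this constant to be $0$. A short type-calculation using $(JX)^{1,0}=iX^{1,0}$ and $(JX)^{0,1}=-iX^{0,1}$ then gives $\alpha(JX)=i(\beta(X)-\overline{\beta}(X))\equiv 0$ pointwise, which is precisely the pointwise inner product of $\eta$ with $\alpha$. Hence $\langle\eta,\alpha\rangle_{L^{2}}=0$ for every harmonic $\alpha$, so $\eta$ is exact and any primitive is a moment map. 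I expect the main obstacle to lie in exactly this step---producing the global holomorphic function $\beta(X)$ and annihilating it at the fixed point---as it is the only place where the K\"ahler structure and holomorphicity of the action are essential; in a merely symplectic or non-integrable almost-complex setting no such Liouville-type rigidity is available.
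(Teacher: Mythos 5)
The paper does not reproduce a proof of this statement; it is cited directly to Frankel \cite{F}, with only the remark that the ``only if'' direction is immediate because a moment map on a compact manifold has critical points, which are fixed. So there is no in-paper argument to compare against, and I assess your proposal on its own terms.

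Your argument is correct and is the standard Hodge-theoretic proof of Frankel's theorem. Every step checks out: $X$ preserves $J$, so $X^{1,0}$ is a holomorphic vector field; the harmonic part $\alpha$ of the closed form $\eta=\iota_X\omega$ decomposes as $\beta+\overline{\beta}$ with $\beta$ holomorphic by the K\"ahler identities; $\beta(X)=\beta(X^{1,0})$ is a global holomorphic function on a compact connected manifold, hence constant, and the existence of a zero of $X$ forces the constant to be $0$; the type computation then gives $\alpha(JX)\equiv 0$, which is the pointwise inner product $\langle\eta,\alpha\rangle$ since $\eta^{\sharp}=JX$, so $\eta$ is $L^2$-orthogonal to every harmonic $1$-form and its harmonic part vanishes. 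Two minor points worth making explicit in a clean write-up: you should assume $M$ connected (used when concluding the holomorphic function is a single constant and that a primitive $H$ gives a well-defined moment map), and the identification $\eta^{\sharp}=JX$ depends on the sign convention relating $g$, $\omega$, and $J$ (the paper uses $g(\cdot,\cdot)=\omega(J\cdot,\cdot)$, which produces $\eta^{\sharp}=-JX$); this only flips a sign and does not affect the orthogonality conclusion. The place where the K\"ahler/holomorphic hypothesis enters irreplaceably is exactly where you flagged it---the Liouville-type rigidity giving that $\beta(X)$ is constant---and in a merely symplectic or non-integrable setting this step fails, consistent with McDuff's $6$-dimensional counterexample discussed later in the paper.
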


	Ono extended Frankel's theorem to the case when $(M^{2n},\omega)$ is symplectic and satisfies the Lefschetz condition, i.e., $\wedge \, \omega^{n-1}$ induces an isomorphism from $H^1(M,\R)$ to $H^{2n-1}(M,\R)$. K\"{a}hler manifolds are well-known to satisfy the Lefschetz condition. See \cite{McS} p.154 or \cite{O}.

	Unfortunately, Frankel's theorem does not extend to the symplectic category in general. In 1988, McDuff~\cite{MD} proved that Frankel's theorem holds for $4$-dimensional symplectic manifolds. But she also found a $6$-dimensional manifold with a symplectic non-Hamiltonian $S^1$ action whose fixed point set is not empty. In her example, all fixed components are $2$-tori and all reduced spaces are diffeomorphic to the $4$-torus. Other such kind of examples are still unknown. See also \cite{K}.

	As McDuff showed, the existence of fixed points does not guarantee that the symplectic circle action is Hamiltonian. But Frankel's theorem can be extended to the symplectic category under some additional conditions . For example, Tolman and Weitsman~\cite{TW} considered semifree circle actions on closed symplectic manifolds with finite fixed points. They proved Frankel's theorem using the localization theorem for equivariant cohomology. In~\cite{Go} and~\cite{LO}, more about Frankel's theorem is discussed under some other conditions.

	In this paper, we assume that the cohomology class $[\omega]$ is integral so that there exists a generalized moment map $\mu : M \rightarrow S^1$. The generalized moment map was introduced in~\cite{MD}. We give the definition by Audin~\cite{Au} in Section~\ref{background}. The symplectic reduction $M_t:= \mu^{-1}(t)/S^1$ carries natural orientation induced from the symplectic form. Recall that $b_2^+$ of an oriented closed $4$-manifold is defined to be the maximal dimension of the subspace of $H^2(M, \R)$ on which the cup product is positive definite. The reduced space may be an orbifold, but Poincar\'{e} duality for orbifolds is enough to define~$b_2^+$. We state our main theorems.

\begin{theorem}\label{main-}
	Let $(M,\omega)$ be a $6$-dimensional closed symplectic $S^1$-manifold with generalized moment map $\mu : M \rightarrow S^1$. Assume that the fixed point set is not empty and the dimension of each component is at most $2$. Then the action is Hamiltonian if and only if $b_2^+(M_{\xi}) = 1$ for any regular value~$\xi$ of~$\mu$.
\end{theorem}

	Theorem~\ref{main-} implies that if the action is non-Hamiltonian, then $b_2^+(M_{\xi}) \geq 2$ for some, hence every, $\xi \in S^1$. We prove Theorem~\ref{main-} in Section~\ref{proof of 1.2} using a result of~\cite{Lin}. In the following theorem, we assume that the action is semifree. An $S^1$ action is called \textit{semifree} if it is free outside the fixed point set.

\begin{theorem} \label{main}
	Let $(M, \omega)$ be a $6$-dimensional closed symplectic $S^1$-manifold with generalized moment map. Suppose the action is semifree and the fixed components are all surfaces, so that all reduced spaces are diffeomorphic as smooth manifolds. Then the reduced space $M_{\red}$ is diffeomorphic to an $S^2$-bundle over a compact Riemann surface $\Sigma_g$ of genus~$g$ if and only if the action is Hamiltonian. Moreover, if the action is Hamiltonian, the number of fixed surfaces with positive genera is at most four and cannot be equal to two. In particular, if the extremal fixed surfaces are spheres, then this number is at most one. If the number is four, then all genera of the four fixed surfaces are equal to~$g$.
\end{theorem}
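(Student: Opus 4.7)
The first assertion is an immediate corollary of Theorem \ref{main-}: every smooth $S^2$-bundle over $\Sigma_g$ has $b_2 = 2$ by Leray--Hirsch and signature $0$ by the Chern--Hirzebruch--Serre formula (both fiber and base have vanishing signature), so $b_2^+ = 1$, and Theorem \ref{main-} forces the action to be Hamiltonian.

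For the remaining assertions, assume the action is Hamiltonian. The moment map $\mu$ is then a perfect Morse--Bott function. Since the action is semifree, the normal weights at each fixed surface $F_i$ lie in $\{\pm 1\}$, so the Morse--Bott index of $F_i$ is $0$, $2$, or $4$ according as $F_i$ is the minimum, an interior fixed surface, or the maximum. The Poincar\'e polynomial of $M$ therefore equals
\[
P_t(M) = (1+2g_{\min}t+t^2) + \sum_{\text{interior } i} t^2(1+2g_i t+t^2) + t^4(1+2g_{\max}t+t^2),
\]
and Poincar\'e duality on $M^6$ gives $b_1(M) = b_5(M)$, hence $g_{\min}=g_{\max}=:g$. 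From the standard symplectic local model near extremal fixed surfaces, the reduced spaces $M_{c_{\min}+\epsilon}$ and $M_{c_{\max}-\epsilon}$ are $\p^1$-bundles over $\Sigma_g$.

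To count the positive-genus fixed surfaces, I will analyze the interior wall-crossings. As $t$ crosses an interior critical value at a $(1,1)$-type fixed surface $F_i$, the reduced spaces $M_{c_i\pm\epsilon}$ differ by a symplectic elementary transformation along $F_i$: the surface $F_i$ sits in each as a symplectic submanifold and the transformation reverses the sign of its normal Euler number. The key rigidity I aim to establish is that a positive-genus $F_i$ can appear only as a section of the ruled reduced space, which in particular forces $g_i=g$. Concretely, the composition $F_i \hookrightarrow M_{c_i-\epsilon} \to \Sigma_g$ has some degree $d \geq 1$, and for $d \geq 2$ a Riemann--Hurwitz estimate combined with the incompatibility of the post-transformation space with a $\p^1$-bundle structure over $\Sigma_g$ at the maximum rules out higher-degree embeddings.

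Assembling these constraints yields the announced bounds. When $g=0$ the extremal reductions are Hirzebruch surfaces and no positive-genus $F_i$ can be a section; a careful enumeration of the possible one-off excursions outside the Hirzebruch class gives $k\le 1$, which also rules out $k=2$ in this case. When $g>0$ the two extremal surfaces contribute two to $k$, and the section-type rigidity allows at most two further interior positive-genus surfaces, all of genus $g$, giving $k\le 4$ and the rigidity statement when $k=4$. The remaining case $k=2$ with $g>0$ (i.e., all interior fixed surfaces being spheres) will be excluded by comparing the Duistermaat--Heckman polynomials attached to $M_{c_{\min}+\epsilon}$ and $M_{c_{\max}-\epsilon}$, which cannot be connected by sphere-only elementary transformations when the extremal bases both have positive genus $g$. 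I expect the delicate step to be proving the section-type rigidity for positive-genus walls, together with the accompanying no-go statement for the $k=2$ case; the Morse--Bott bookkeeping and the reduction of the Hamiltonicity statement to Theorem \ref{main-} are routine.
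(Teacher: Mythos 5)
Your reduction of the first assertion to Theorem \ref{main-} via $b_2^+ = 1$ for $S^2$-bundles over $\Sigma_g$ is correct and is exactly the remark the paper makes after stating Theorem \ref{main} (the paper nonetheless re-proves this part in Propositions \ref{thm1} and \ref{lastprop} because the machinery developed there — tracking $[\omega_t]$ and the Euler class across critical levels — is what the counting argument actually uses). Your Morse--Bott bookkeeping showing $g_{\min}=g_{\max}=g$ and that every interior fixed surface has index $2$ is also fine. The rest, however, has real gaps.

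The central problem is the ``section-type rigidity'' claim: that a positive-genus interior fixed surface $F_i$ must be a section of the ruled reduced space, and that this forces $g_i = g$. This is not proved in your proposal, and as stated it is not correct. Writing $D(X_i)=a_i u+b_i v$ as in Section 3, the adjunction formula for the symplectic surface $X_i\subset M_{red}$ gives $(a_i-(1-g))(b_i-1)=g_i-g$, and combining Lemma \ref{Li} with equation (\ref{class}) gives $\sum_{\mathrm{interior}} b_i = 2$. From this and the positivity $\int_{X_i}\omega_s>0$ one deduces $b_i\in\{0,1,2\}$, with $b_i=1\Rightarrow g_i=g$ and $b_i=0\Rightarrow g_i=0$; but $b_i=2$ is also allowed and can carry positive genus $g_i\neq g$ (a ``double section'' class, not a section). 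The bound of at most two interior positive-genus surfaces, and the rigidity $g_i=g$ when there are four, comes from the arithmetic of the constraint $\sum b_i=2$, not from any assertion that positive genus implies section class. Also, Riemann--Hurwitz is not the appropriate tool here: the composition $X_i\hookrightarrow M_{red}\to\Sigma_g$ is merely continuous, and the genus control comes from the adjunction formula for symplectic surfaces in a symplectic $4$-manifold, not from a branched-cover count.

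For the exclusion of $k=2$, your plan to compare Duistermaat--Heckman polynomials across sphere-only wall-crossings is vague and I don't see how it closes. The paper's argument (Proposition \ref{twopositivegenus}) is shorter and different: if $k=2$ then both positive-genus surfaces are extremal with genus $g>0$, so by Lemma \ref{Li} the Euler classes near the extrema are $ku-v$ and $-k'u+v$, forcing some interior fixed sphere $Z$ with $D(Z)=au+bv$, $b\neq 0$. Pulling back the fundamental class of $\Sigma_g$ gives $\int_Z i^*\pi^*[\sigma]=bc\neq 0$, yet any continuous map $S^2\to\Sigma_g$ with $g>0$ has degree zero (since $H^1(S^2)=0$ kills the product of $1$-classes generating $H^2(\Sigma_g)$). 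You should replace the DH-polynomial comparison with this topological obstruction. Finally, for the $g=0$ case you gesture at ``a careful enumeration of one-off excursions outside the Hirzebruch class''; the paper instead tracks the piecewise-linear function $f(t)=\frac{d}{dt}\min(c_t,d_t)$ and shows via Lemmas \ref{lemma3}--\ref{lemma5} that it is non-increasing, starts at $1$ near the minimum, ends at $-1$ near the maximum, and drops by at least $2$ at any positive-genus wall — hence at most one such wall. That quantitative mechanism is missing from your sketch.
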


	Note that if the action is semifree and $M^{S^1}$ is isolated, then $M \cong S^2 \times S^2 \times S^2$ and $|M^{S^1}| = 8$ as in~\cite{TW}. On the other hand if $M^{S^1}$ consists of surfaces, we will see in Section~\ref{construction of example} that the number of fixed components can be arbitrary. But the number of fixed surfaces with positive genera is bounded by four by Theorem~\ref{main}.

	Since any ruled surface has $b_2^+ = 1$, Theorem~\ref{main-} implies the first statement of Theorem~\ref{main}. However, to get the second part of Theorem~\ref{main}, we need a different approach for the proof of the first part of Theorem~\ref{main}. We need to study the change of the reduced symplectic form $\omega_t$ and the Euler class~$e$ on $H^2(M_{\red})$ when $\omega_t$ and $e$ pass through a critical level.

	Note that if the action is semifree, Hamiltonian, and $M^{S^1}$ consists of surfaces, then any reduced space $M_{\red}$ with respect to the moment map is diffeomorphic to an $S^2$-bundle over $\Sigma_g$. Indeed, if all fixed components in non-extremal levels of the moment map are of codimension~$4$, the diffeomorphism type of the reduced space does not change when passing through a critical level. Moreover, the reduced space at a critical level is a smooth manifold. (See \cite{MD} or \cite{GS}.) Therefore in order to find the diffeomorphism type of $M_{\red}$, it is enough to look at the reduced space near the minimum. The regular level near the minimum is an $S^3$-bundle over the minimum, so $M_{\red}$ is a ruled surface.

	To prove Theorem~\ref{main}, we use the fact that each fixed surface is a symplectic submanifold of the reduced space at the level in which the fixed surface lies. This will be treated in Section~\ref{Semifree $S^1$ actions} and Section~\ref{proof for nontrivial}.

	In Section~\ref{construction of example}, we construct several examples of semifree Hamiltonian $S^1$-manifolds. In~\cite{L2}, Li constructed some semifree Hamiltonian $S^1$-manifolds such that $M^{S^1}$ consists of surfaces. In her examples, the number of fixed surfaces with positive genera could be one or three. In Example~\ref{4genus}, we construct a family of examples of Hamiltonian $S^1$-manifolds whose fixed set consists of four surfaces with nonzero equal genus and any number of spheres. We also construct an example with $N$ fixed spheres for any $N \geq 4$ in Example~\ref{sphere}. Therefore, the upper bound on the number of fixed surfaces with positive genera is optimal and there is no constraint on the number of fixed spheres. The construction follows that in~\cite{L2}. To construct such Hamiltonian $S^1$-manifolds, we first construct local pieces which are obtained by the methods of Guillemin and Sternberg~\cite{GS}, namely simple cobordisms between reduced spaces, and then glue them together. A symplectic structure on the ruled surface is said to be \textit{compatible with the ruling} if all fibers are symplectic. Since two cohomologous symplectic forms on the same ruled surface which are compatible with the ruling are isotopic~\cite{LM}, we only need to check that two pieces have the same Euler class and cohomologous symplectic form on the gluing region. We discuss this in Section~\ref{construction of example} in more detail.

	In Section~\ref{background} we give a brief review about symplectic circle actions with generalized moment maps. In Section~\ref{symplectic ruled surface} we introduce results about symplectic structures on ruled surfaces due to Li and Liu~\cite{LL}, and study symplectomorphisms on ruled surfaces. In Section~\ref{proof of 1.2} we give the proof of Theorem~\ref{main-}. In Section~\ref{Semifree $S^1$ actions} and~\ref{proof for nontrivial} we give the proof of Theorem~\ref{main}.

\begin{remark}
	Here is a remark on the sign convention. In~\cite{MD}, McDuff regards the symplectic form $\omega$ on $\C^n$ as $\sum_i dx_i\wedge dy_i$, moment map~$H$ as a map satisfying $i_X \omega = dH$ for the fundamental vector field $X$ of the standard diagonal action, the Euler class of each level set as $-d\alpha$ where $\alpha$ is a connection $1$-form on a level set, and the gradient flow of $H$ as $JX$. In contrast, in \cite{Au}, \cite{L1} and \cite{L2}, they assume that the symplectic form on $\C^n$ is $\sum_i dy_i \wedge dx_i$ so that all information has opposite signs, i.e., they use the symplectic form $-\omega$ where $\omega$ is the one that McDuff used. In this paper, we use the sign setting of \cite{Au} and \cite{L1}.
\end{remark}

\subsection*{Acknowledgments}
We thank Dusa McDuff for commenting to us that examples constructed in Section~\ref{construction of example} are uniquely determined by the given data. We are indebted to the referee who read the paper with meticulous care and pointed out many inaccuracies.

\section{Background}\label{background}

	In this section we give some basic materials needed to state and prove the main theorems.

	Let $M^{2n}$ be a smooth closed connected manifold. A $2$-form $\omega$ on~$M$ is called \textit{symplectic} if it is closed and non-degenerate. Since $\omega$ is non-degenerate, $(M,\omega)$ has a natural orientation given by $\int_M \omega^{n} > 0$. We call such pair $(M, \omega)$ a \textit{symplectic manifold}. An $S^1$-action on $(M, \omega)$ is called \textit{symplectic} if it preserves the symplectic form~$\omega$. Let $X$ be the fundamental vector field of the $S^1$-action. Then the condition that the action is symplectic is equivalent to that $i_X \omega$ is closed by Cartan's formula. Furthermore, if $i_X \omega$ is exact, we say the action is \textit{Hamiltonian}. If the action is Hamiltonian, there exists a function $H : M \rightarrow \R$ satisfying $i_X \omega = dH$ which is called a \textit{moment map} of the action. It is a well-known fact that $H$ is a perfect Morse-Bott function. Note that the critical point set of~$H$ is equal to the fixed point set~$M^{S^1}$. Since $M$ is compact, the fixed point set is not empty.

	Choose an $S^1$-invariant $\omega$-compatible almost complex structure~$J$ so that $g(\cdot, \cdot) = \omega(J\cdot, \cdot)$ defines a metric on~$M$. Then $-JX$ is the gradient vector field of~$H$ with respect to the metric~$g$. By Morse Theory, for any closed regular interval $[a,b]$, the gradient flow of~$H$ gives an isotopy from $H^{-1}(a)$ to $H^{-1}(b)$. Furthermore, all critical points have even indices. In particular, every level set of~$H$ is connected.

	Now assume that the cohomology class $[\omega]$ is integral in $H^2(M,\R)$. Then $i_X \omega$ is also integral so that we can define a map $\mu : M \rightarrow S^1$ as follows (See \cite{MD} and \cite{Au}). Choose any point $x_0$ in $M$ and define $\mu(x) = \int_{x_0}^x i_X \omega$. For any paths $\sigma_1$ and $\sigma_2$ from $x_0$ to $x$, the difference of their path integrals $\int_{\sigma_1} i_X \omega - \int_{\sigma_2} i_X \omega$ is an integer, so $\mu$ is well-defined as an $S^1$-valued function $\mu : M \rightarrow \R/\Z \cong S^1$. This map is called a \textit{generalized moment map}. A generalized moment map $\mu$ satisfies many properties of the moment map. Locally $\mu$ satisfies the equation $i_X\omega = d\mu$. Note that for an $\omega$-compatible almost complex structure $J$, the infinitesimal action of the vector field $-JX$ determines an orientation of $S^1 \cong \R/\Z$, which we call \textit{the induced orientation with respect to the $S^1$-action}. Therefore we can define the index of the critical point. Note that if $\mu$ can be lifted to an $\R$-valued function, the lifted map is the moment map of the given action. Hence the action is Hamiltonian.

\begin{remark}\label{RR}\cite{R}
	Given a Hamiltonian $S^1$-action on $(M,\omega)$, consider another symplectic form $\omega'$ on $M$ such that the given $S^1$-action is symplectic with respect to $\omega'$. Then the action is also Hamiltonian with respect to $\omega'$. This follows from Proposition \ref{R} below, i.e., if we let $C_0$ be a fixed component which is sent to the minimum by the moment map with respect to $\omega$, then any loop $\sigma$ in $M$ is homotopic to some loop in $C_0$. If we change an $S^1$-invariant symplectic structure by $\omega'$ and $\mu$ is a generalized moment map with respect to $\omega'$, then $\mu_*(\sigma)$ should be zero in $\pi_1(S^1)$ so that $\mu$ can be lifted to an $\R$-valued moment map. Thus, if we want to prove a symplectic $S^1$-action on $(M,\omega)$ to be Hamiltonian, it is enough to prove the claim with the assumption that $[\omega]$ is integral.
\end{remark}

\begin{proposition}\label{R}\cite{R}
	Let $M$ be a compact connected Riemannian manifold and $f$ a Morse-Bott function with no critical manifold of index~$1$. Then, there is only one connected critical manifold $C_0$ of index $0$ and $\pi_1(M/C_0) = 0$. In fact, for any loop $\sigma \in \pi_1(M)$, there is a homotopy from $\sigma$ to some loop in $C_0$ relative to the points in $\sigma \bigcap C_0$ along the negative gradient flow of~$f$.
\end{proposition}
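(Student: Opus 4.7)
The plan is to exploit the stable-manifold decomposition $M = \bigsqcup_C W^s(C)$ coming from the negative gradient flow $\phi_t$ of $f$, where the stable manifold $W^s(C)$ of a Morse-Bott critical manifold $C$ is a smooth submanifold of codimension equal to its index $\lambda(C)$. Under the hypothesis, every non-minimum stratum has codimension $\geq 2$. For the first assertion, the union $Z := \bigcup_{\lambda(C) \geq 2} W^s(C)$ is a finite union of smooth submanifolds of codimension $\geq 2$, so $M \setminus Z$ is path-connected: any path joining two of its points can be made transverse to each $W^s(C)$ appearing in $Z$, and transversality of a $1$-dimensional path with a codim-$\geq 2$ submanifold forces empty intersection. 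On the other hand $M \setminus Z = \bigsqcup_{\lambda(C) = 0} W^s(C)$ is a disjoint union of nonempty open subsets, so it can be connected only when exactly one $C_0$ of index $0$ is present.

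For the second assertion, given a loop $\sigma : S^1 \to M$, I would first perturb $\sigma$ relative to $\sigma \cap C_0$ to a smooth loop $\tilde \sigma$, transverse to every $W^s(C)$ with $\lambda(C) \geq 2$; the same dimension count $\dim S^1 + \dim W^s(C) \leq 1 + (\dim M - 2) < \dim M$ forces $\tilde \sigma(S^1) \cap Z = \emptyset$, hence $\tilde \sigma(S^1) \subset W^s(C_0)$. I would then push $\tilde \sigma$ into $C_0$ by the negative gradient flow:
\[
    H(\theta, s) = \phi_{s/(1-s)}(\tilde \sigma(\theta)), \quad s \in [0,1),
\]
extended at $s = 1$ by $H(\theta, 1) := \lim_{t \to \infty} \phi_t(\tilde \sigma(\theta)) \in C_0$. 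Points of $\sigma \cap C_0$ are fixed by both the perturbation (by choice of support) and the gradient flow (since $C_0$ is critical), so the composed homotopy is rel $\sigma \cap C_0$. Passing to the quotient $M/C_0$ then gives $\pi_1(M/C_0) = 0$.

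The key technical obstacle will be checking that $H$ extends continuously at $s = 1$, which amounts to uniform-in-$\theta$ convergence of $\phi_t(\tilde \sigma(\theta))$ to $C_0$. The Morse-Bott lemma supplies tubular coordinates near $C_0$ in which $f$ is quadratic and positive-definite transverse to $C_0$; the flow then contracts the normal fibers exponentially onto $C_0$. Since $\tilde \sigma(S^1) \subset W^s(C_0)$ is compact, its image uniformly enters such a tubular neighborhood for large $t$, which yields the required continuity of $H$ on $S^1 \times [0,1]$.
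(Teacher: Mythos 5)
The paper states Proposition~\ref{R} with a citation to Rochon \cite{R} and supplies no proof of its own, so there is no in-paper argument to compare against. Your Morse-theoretic proof via the stable-manifold stratification of the negative gradient flow is correct and is, in spirit, exactly the proof the proposition's own wording (``along the negative gradient flow of $f$'') invites: since every non-minimal critical manifold has index $\geq 2$, its stable manifold $W^s(C)$ under the negative gradient flow has codimension $\geq 2$, so generic one-parameter objects miss the union $Z$ of these strata. This simultaneously singles out a unique $C_0$ (the basins $W^s(C)$ with $\lambda(C)=0$ are open and partition the connected set $M\setminus Z$, so there can be only one) and lets you push a perturbed loop lying in $W^s(C_0)$ onto $C_0$ by the flow. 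Two small points are worth spelling out if you were to write this up carefully. First, the relative transversality used to perturb $\sigma$ rel $\sigma^{-1}(C_0)$ is legitimate precisely because $C_0$ lies in the open set $W^s(C_0)$, which is disjoint from every $W^s(C)$ with $\lambda(C)\geq 2$; hence $\sigma$ already vacuously satisfies the transversality condition on a neighborhood of $\sigma^{-1}(C_0)$, and the standard relative transversality theorem applies to the finitely many remaining strata. Second, continuity of $H$ at $s=1$ requires not only that the compact set $\tilde\sigma(S^1)$ is uniformly attracted into a Morse--Bott tubular neighborhood but also that the landing map $W^s(C_0)\to C_0$ is continuous, which the tube coordinates you invoke do provide. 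Neither point is a gap; both are routine details, and the argument as a whole is sound.
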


\begin{remark}\label{morse}
	By a Morse theoretic argument, if $M$ is a differentiable manifold of dimension $n$ and if the Morse function $f : M \rightarrow \R$ has no critical point of index $1$ or $n-1$, then the number of connected components of $f^{-1}(t)$ is constant for all $t \in \textrm{Im}f \subset \R$. Moreover, the number of connected components of a level set is equal to the number of connected components of $M$. In particular, if $M$ is connected, then every level set is connected. (See \cite{Au} p.112)
\end{remark}

    The following proposition seems to be known to people, but it is difficult to find proper references for the proof. We thereby provide the proof here.

\begin{proposition}\label{connect}

	Let $(M,\omega)$ be a connected closed $2n$-dimensional symplectic $S^1$-manifold with integral symplectic form, i.e., $[\omega] \in H^2(M,\Z)$. Then, there is a generalized moment map $\mu : M \rightarrow S^1$ such that $\mu^{-1}(t)$ is connected or empty for each $t \in S^1$. In particular, if the action is non-Hamiltonian, then we have a generalized moment map $\mu$ such that $\mu^{-1}(t)$ is non-empty and connected for all $t \in S^1$.

\end{proposition}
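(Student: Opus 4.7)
The plan is to construct $\mu$ so that it lifts to a real-valued proper Morse--Bott function $\tilde\mu$ on a connected cover of $M$, and then to invoke the proper noncompact analogue of Remark \ref{morse} to deduce connectedness of the level sets.

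First I would record that for any generalized moment map $\mu_0 : M \to S^1$, the critical set equals $M^{S^1}$ and every critical submanifold has even Morse--Bott index: picking an $S^1$-invariant $\omega$-compatible almost complex structure, the normal bundle at a fixed component splits into $S^1$-equivariant complex line bundles, and the index equals twice the number of negative weights. In particular no critical manifold has index $1$ or $2n-1$.

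I would then construct $\mu$ with target a circle $\R/N\Z \cong S^1$ for a suitable $N > 0$. In the non-Hamiltonian case, let $k_0 \in \Z_{>0}$ be the positive generator of $\{\int_\gamma i_X\omega : \gamma \in \pi_1(M)\} \subseteq \Z$ and take $N = k_0$; then $\mu(x) := \int_{x_0}^x i_X\omega \pmod N$ is a well-defined smooth map $M \to \R/N\Z$ whose induced homomorphism on $\pi_1$ is surjective. In the Hamiltonian case, $i_X\omega$ is exact, so I would take the $\R$-valued moment map $H$, pick any $N > \max H - \min H$, and set $\mu = H \pmod N$; each fibre $\mu^{-1}(t)$ is then either empty or equal to a single connected fibre of $H$. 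Either way, pulling back $\R \to \R/N\Z$ along $\mu$ produces a cover $\tilde M \to M$ on which $\tilde\mu : \tilde M \to \R$ is a proper Morse--Bott function (properness follows from compactness of $M$ and discreteness of the fibres), and in the non-Hamiltonian case $\tilde M$ is connected by the surjectivity of $\mu_*$.

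Finally I would apply the proper noncompact Morse--Bott analogue of Remark \ref{morse} to $\tilde\mu$, whose critical manifolds all have index outside $\{1, 2n-1\}$, to conclude that every level set of $\tilde\mu$ is empty or connected, hence so is every $\mu^{-1}(t)$. For the \emph{in particular} part, in the non-Hamiltonian case $\mu(M) \subseteq S^1$ is connected and cannot omit a point of $S^1$, for otherwise $\mu$ would lift to $\R$ and contradict the non-Hamiltonian hypothesis; so every level set is nonempty. The main obstacle will be the extension of Remark \ref{morse} to this noncompact setting: one has to rerun the gradient-flow isotopies between consecutive critical values and the handle-attachment analysis at each critical submanifold, but both arguments should carry over using properness of $\tilde\mu$.
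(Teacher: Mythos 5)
Your proposal reaches the same map the paper constructs, but by a different path, and the key step is under-argued.

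The paper keeps the initial generalized moment map $\mu_0 : M \to \R/\Z$, shows directly that every loop has winding number divisible by the fiber component count $k$ (cutting $M$ along a regular level set, analyzing the permutation of the components $L_1,\dots,L_k$ induced by the Morse flow, proving it is a $k$-cycle, and homotoping loops to monotone ones via the paper's Lemma~2.6), and then lifts $\mu_0$ through the $k$-fold cover of the circle. You instead normalize by the period lattice of $i_X\omega$ at the outset, taking $N = k_0$ so that $\mu_* : \pi_1(M) \to \Z$ is onto; the pullback $\widetilde M \to M$ is then connected, $\widetilde\mu : \widetilde M \to \R$ is proper, and you want to read off connectedness of fibers from a noncompact Morse--Bott version of Remark~\ref{morse}. (In fact the paper's $k$ equals your $k_0$, so the two constructions produce the same generalized moment map; what differs is how connectedness is established.) Your route is structurally cleaner and avoids the cut-and-paste combinatorics.

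However, the step you relegate to ``should carry over using properness'' is precisely where the work is. The compact proof of Remark~\ref{morse} is anchored at the unique minimum and maximum: near the unique index-$0$ critical submanifold the level set is visibly a connected sphere bundle, and middle-index surgeries preserve this. On $\widetilde M$ there is no minimum or maximum, so there is no anchor, and simply ``rerunning the gradient-flow isotopies and the handle-attachment analysis'' does not by itself yield connectedness; one could in principle have several parallel ``tubes'' of level sets with no handle ever merging them. What one actually needs is: (i) rule out index-$0$ and index-$2n$ critical submanifolds for $\widetilde\mu$ --- such a submanifold would spawn a new connected component of a sublevel set (respectively superlevel set) which, in the absence of index-$1$ (respectively index-$(2n-1)$) handles, could never merge with anything, contradicting that $\widetilde M$ is connected and $\widetilde\mu$ is surjective onto $\R$; (ii) with all indices now in $\{2,\dots,2n-2\}$, show the level-set component count is locally constant; (iii) conclude that this constant is $1$ by observing that $\widetilde M$ then decomposes as a disjoint union of as many connected ``tubes'' as there are level-set components, and $\widetilde M$ is connected. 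This is essentially the same content as the paper's Proposition~\ref{R} plus its Lemma~2.6 about homotoping loops to monotone ones, and it should be spelled out rather than asserted; as written, the proposal defers exactly the point that makes the proposition nontrivial.
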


	\begin{proof}
	If the action is Hamiltonian, then $\mu$ can be lifted to an $\R$-valued moment map $\widetilde{\mu}$. Therefore by Remark~\ref{morse}, $\widetilde{\mu}^{-1}(t)$ is connected or empty for each $t \in \R$. Let $m > \max\ \widetilde{\mu} - \min\ \widetilde{\mu}$ be an integer. For a covering map $p : \R \rightarrow S^1$ given by $\R \rightarrow \R / m\Z$, we get a generalized moment map $\mu := p \circ \widetilde{\mu}$ which satisfies the condition of Proposition~\ref{connect}.

	Now assume that the action is non-Hamiltonian. Let $\mu : M \rightarrow S^1$ be a generalized moment map as defined before Remark~\ref{RR}. Since all indices of the critical points are even, there is no critical point of index $1$ or $2n-1$ in $M$. This means that the number~$k$ of connected components of $\mu^{-1}(t)$ is constant for all $t \in S^1$. We may assume $k>1$. Note that $\mu$ induces a group homomorphism $\mu_* : \pi_1(M) \rightarrow \pi_1(S^1) \cong \Z$.

	We claim that $\text{Im}\mu_* = k  \Z \subset \Z$ so that $\mu$ can be lifted to the $k$-fold covering of $S^1$. Fix a regular value $t_0 \in S^1 \cong \R / \Z$ and let $L_1,L_2, .. , L_k$ be the components of $\mu^{-1}(t_0)$. For a sufficiently small $\epsilon$, consider a Hamiltonian $S^1$-manifold $\mu^{-1}(S^1-(t_0-\epsilon,t_0+\epsilon))$ with boundary, regarding $S^1-(t_0-\epsilon,t_0+\epsilon)$ as a closed interval in $\R$ with moment map $\mu$. Note that $\mu^{-1}((t_0-\epsilon,t_0+\epsilon)) \cong \coprod L_i \times (t_0-\epsilon, t_0 + \epsilon)$. Then $\mu^{-1}(t_0-\epsilon) \cong \coprod L_i \times (t_0 - \epsilon)$ is the maximum level set of $\mu$ and $\mu^{-1}(t_0+\epsilon) \cong \coprod L_i \times (t_0 + \epsilon)$ is the minimum level set of $\mu$ in $\mu^{-1}(S^1-(t_0-\epsilon,t_0+\epsilon))$.

	By Remark~\ref{morse}, $\mu^{-1}(t)$ has $k$ components for all $t \in S^1-(t_0-\epsilon,t_0+\epsilon)$. Let $M_i$ be the connected component of $\mu^{-1}(S^1-(t_0-\epsilon,t_0+\epsilon))$ whose minimum level set is $L_i \times (t_0 + \epsilon)$. Then M is decomposed by disjoint pieces $\coprod L_i \times (t_0-\epsilon, t_0 + \epsilon)$ and $\coprod M_i$. If the maximum level set of $M_i$ is $L_i \times (t_0 - \epsilon)$, then it implies that there is no path in $M$ from $L_i \times (t_0-\epsilon,t_0+\epsilon)$ to $L_j \times (t_0-\epsilon,t_0+\epsilon)$ for any $j \neq i$, which contradicts to the assumption that $M$ is connected. So, the maximum level set is $L_{\sigma(i)} \times (t_0 - \epsilon)$ for some $\sigma(i) \neq i$. By a similar argument, we deduce that $\sigma : [k] \rightarrow [k]$ is a permutation cycle of length~$k$.

	Choose any loop $\tau : [0,1] \rightarrow M$ with $\tau(0) = \tau(1) = x$ such that $\mu_*([\tau]) \neq 0$ in $\pi_1(S^1)$. Without loss of generality, we may assume that $x \in L_1$. If $\tau$ goes around along the orientation preserving direction, then $\tau$ passes through each $L_i$ for all $i=1, ... , k$ and the numbers of intersection points with $L_i$ are all equal by the previous argument. So, $\mu_*([\tau]) \equiv 0 \,\,(\mathrm{mod} \,\, k)$. We need the following lemma to finish the proof of the claim.

\begin{lemma}
	Let $(M,\omega)$ be a connected closed symplectic $S^1$-manifold.
	\begin{itemize}
		\item[(i)] Suppose the action is Hamiltonian with moment map $\mu : M \rightarrow \R$. Let $r$ be a regular value of~$\mu$ in~$\R$. For any points $x,y$ which lie in $\mu^{-1}(r)$ and for any path $\alpha$ connecting $x$ and~$y$ in~$M$, $\alpha$ is homotopic to a path in $\mu^{-1}(r)$ relative to~$x$ and~$y$.
		\item[(ii)] With the same assumption, for any points $x$ in the minimum level set and $y$ in the maximum level set of~$\mu$,  any path from~$x$ to~$y$ is homotopic to some path $\beta : [0,1] \rightarrow M$ relative to $x$ and $y$ such that $\mu \circ \beta$ is non-decreasing.
		\item[(iii)] Suppose the action is non-Hamiltonian with a generalized moment map $\mu : M \rightarrow S^1$. Then any loop in M is homotopic to some loop $\gamma : [0,1] \rightarrow M$ relative to the base point such that $\mu \circ \gamma$ is non-decreasing or non-increasing with respect to the induced orientation on $S^1$.
	\end{itemize}
\end{lemma}

	\begin{proof}
	(i) For a regular value $r \in \R$ with $x$,$y$ $\in \mu^{-1}(r)$, let $\alpha : [0,1] \rightarrow M$ be a path from $x$ to $y$. Since $[0,1]$ is compact, we can find a partition $0 = a_1 < a_2 < \dots < a_k = 1$ such that $\mu \circ \alpha|_{[a_i,a_{i+1}]} \geq r$ or $\leq r$ and $\mu(\alpha(a_i)) = r$ for $i = 1,2, \dots , k-1$. Assume that $\mu \circ \alpha|_{[a_1,a_2]} \geq r$. Since $\mu^{-1}(r)$ is connected, there is a path $\delta$ from $\alpha(a_2)$ to $\alpha(a_1)$ on $\mu^{-1}(r)$. Then $\delta * \alpha|_{[a_1,a_2]}$ is a loop at $\alpha(a_1)$ such that $\mu (\delta * \alpha|_{[a_1,a_2]}) \geq r$ where $*$ is the product of paths. By Proposition \ref{R}, this loop $\delta * \alpha|_{[a_1,a_2]}$ is homotopic to some loop in $\mu^{-1}(r)$ by considering the manifold $\mu^{-1}([r,\infty))$ and we can choose such homotopy fixing $\delta$. It gives a homotopy from the path $\alpha|_{[a_1,a_2]}$ to some path on $\mu^{-1}(r)$ such that $\alpha(a_1)$ and $\alpha(a_2)$ are fixed. Similarly, we can find the homotopy on each $(a_i, a_{i+1})$ for all $i$.

	(ii) Let $\alpha$ be a path from $x$ to $y$. Then we may choose a partition $0 = a_1 < a_2 <  ... < a_k = 1$ such that $\mu \circ \alpha$ is non-decreasing on $(a_i, a_{i+1})$, or $\alpha(a_i) = \alpha(a_{i+1})$. In the latter case, we apply (i) to get a non-decreasing path.

	(iii) Consider the following diagram
\[\xymatrix{\widetilde{M} \ar[r]^{\widetilde{\mu}} \ar[d]^{\pi}& \R \ar[d]^{p}\\ M \ar[r]_{\mu}& S^1}\]
where $p$ is a universal cover of~$S^1$, and ${\widetilde{M}}$ is the pull-back manifold for~$\mu$. Then ${\widetilde{M}}$ is a $\Z$-fold covering and a non-compact manifold.

		For any loop~$\alpha$ in~$M$, there is a unique path lifting ${\widetilde{\alpha}}$ on ${\widetilde{M}}$ for a given starting point of ${\widetilde{\alpha}}$. Moreover ${\widetilde{\mu}}$ is a Morse function with critical points of even indices on ${\widetilde{M}}$. By (ii), we can get a path ${\widetilde{\gamma}}$ which is non-decreasing along the gradient flow of ${\widetilde{\mu}}$. It is easy to see that $\gamma = \pi \circ \widetilde{\gamma}$ is the loop we want.
\end{proof}

	Now, back to the proof of Proposition~\ref{connect}. If we choose any generalized moment map $\mu$, we get a lifting ${\widetilde{\mu}} : M \rightarrow S^1$ whose level sets are all connected. If we regard the $k$-fold covering~$p$ of $S^1$ as a map $\R / k\Z \rightarrow \R / \Z$, then $p$ is a local isometry and we have $ \iota_X \omega = d\mu = dp \circ d{\widetilde{\mu}} = d{\widetilde{\mu}}$ so that ${\widetilde{\mu}}$ satisfies the Hamiltonian equation locally. Hence $\widetilde{\mu}$ is a desired generalized moment map.
\end{proof}

	From now on, we assume that level sets of the generalized moment map are connected. Assume that the symplectic $S^1$ action on $(M, \omega)$ is \textit{semifree}, i.e., it is free on $M \backslash M^{S^1}$. Let $\mu : M \rightarrow S^1$ be a generalized moment map. For a regular value $t \in S^1$ of~$\mu$, the inverse image $\mu^{-1}(t)$ is a $(2n-1)$-dimensional free $S^1$-manifold so that $\mu^{-1}(t) \rightarrow \mu^{-1}(t)/S^1$ is a principal $S^1$-bundle over $\mu^{-1}(t)/S^1$. Since the action is free on regular levels,  $\mu^{-1}(t)/S^1$ is a connected smooth manifold with the induced symplectic structure. It is called a \textit{symplectic reduction} or \textit{reduced space} of~$M$ at~$t$, and denoted by $(M_t, \omega_t)$ where $\omega_t$ is the reduced symplectic form on~$M_t$.

	Assume that all fixed components in~$M$ are of codimension~$4$. Then all reduced spaces at any level~$t$ have the same diffeomorphism type (even if $t$ is a critical value, see \cite{GS}) and we denote their representative by~$M_{\red}$. In fact, all reduced spaces are isotopic to each other along the Morse flow. So we may consider the set of all pairs $(M_t, \omega_t)$ as an $S^1$-parametrized family of symplectic manifolds $(M_{\red}, \omega_t)$. Moreover, each fixed component $X_i$ is symplectically embedded in the reduced space at the critical level containing~$X_i$. When $t$ varies in a regular interval $(a,b) \in S^1$, by the Duistermaat-Heckman theorem, we have
\begin{equation}\label{DH}
	[\omega_t] = [\omega_r] - e(t-r) \in H^2(M_{\red},\R)
\end{equation}
where $r,t \in (a,b)$, $e \in H^2(M_{\red}, \Z)$ is the Euler class of the $S^1$-bundle $\mu^{-1}(t) \rightarrow M_{\red}$ (See~\cite{Au} or~\cite{DH}). Let $s \in S^1$ be a critical value of $\mu$, and let $X_1, ... , X_k$ be the fixed components of codimension 4 contained in $\mu^{-1}(s)$. Then the change of the Euler class for the principal $S^1$-bundle over $M_{\red}$ is given by
\begin{equation}\label{class}
	e_{s+} = e_{s-} + \sum_{X_i \in \mu^{-1}(s)}{D(X_i)}.
\end{equation}
Here, $e_{s-}$, respectively $e_{s+}$, denotes the Euler class at levels just below, respectively above, the critical level $s$ and $D(X_i) \in H^2(M_{\red},\Z)$ denotes the Poincar\'e dual of $[X_i] \in H_{2n-4}(M_{\red},\Z)$. See \cite{GS} Theorem~13.2 for more details.

	Suppose that $(M,\omega)$ is a non-Hamiltonian $S^1$-manifold with a generalized moment map $\mu : M \rightarrow S^1$. Fix a regular value $r$. Then $(M,\omega)$ can be reconstructed as follows. Consider the pull-back manifold
\[\xymatrix{(\widetilde{M}, \widetilde{\omega}) \ar[r]^{\widetilde{\mu}} \ar[d]^{\pi}& \R \ar[d]^{p}\\ (M,\omega) \ar[r]_{\mu}& S^1}\]
where $\widetilde{\omega} = \pi^* \omega$. Let $[t_{\min}, t_{\max}] \subset \R$ be a closed interval such that $p(t_{\min}) = p(t_{\max}) = r$ and $p([t_{\min}, t_{\max}])$ is a generator of $\pi_1(S^1)$.  Let $\widetilde{M'}$ be the preimage of $[t_{\min}, t_{\max}]$ by $\widetilde{\mu}$. Then $(\widetilde{M'},\widetilde{\omega}|_{M'})$ is a Hamiltonian $S^1$-manifold with two boundary components $M_+ = \widetilde{\mu}^{-1}(t_{\max})$ and $M_- = \widetilde{\mu}^{-1}(t_{\min})$. Note that $\pi|_{M_+}$, respectively $\pi|_{M_-}$, is an $S^1$-equivariant diffeomorphism from $M_+$, respectively $M_-$, to $\mu^{-1}(r)$ and the induced map $\pi|_{M_+} / S^1$, respectively $\pi|_{M_-} / S^1$, is a symplectomorphism from $M_+ / S^1$, respectively $M_- / S^1$, to $(\mu^{-1}(r)/S^1, \omega_r)$ where $\omega_r$ is the reduced symplectic form at level $r$. Now, define $M' = \widetilde{M'} / \phi$ to be the quotient space where $\phi : M_+ \rightarrow M_-$ is defined by $\phi = \pi|_{M_-}^{-1} \circ \pi|_{M_+}$.

\begin{proposition}\label{cor}
	The above map $\pi|_{\widetilde{M'}} : \widetilde{M'} \rightarrow M$ induces an $S^1$-equivariant symplectomorphism $\pi|_{\widetilde{M'}} / \phi :  M' \rightarrow M$.
\end{proposition}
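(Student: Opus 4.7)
The plan is to verify in order that $\pi|_{\widetilde{M'}}$ factors through the identification $\phi$, that the induced map $\bar\pi : M' \to M$ is bijective, smooth with smooth inverse, $S^1$-equivariant, and symplectic. The geometric content is entirely contained in the observation that $\widetilde{M} \to M$ is a covering (in fact $\Z$-fold), that $\widetilde{M'}$ is a fundamental domain for the deck group action on $\widetilde{M}$, and that $\phi$ is nothing other than the restriction of a generating deck transformation to the boundary of this fundamental domain.

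First I would verify that $\pi|_{\widetilde{M'}}$ descends to $M'$. For $x \in M_+$, by the definition $\phi(x) = \pi|_{M_-}^{-1}\circ \pi|_{M_+}(x)$, so $\pi(\phi(x)) = \pi(x)$ by construction. Hence $\pi|_{\widetilde{M'}}$ is constant on $\phi$-orbits and descends to a continuous map $\bar\pi : M' \to M$. Next, surjectivity follows from the fact that $p|_{[t_{min},t_{max}]}$ surjects onto $S^1$ and the pullback square is fiberwise a bijection: any $y \in M$ has some preimage $\tilde{y}\in\widetilde{M}$, and translating by the deck group (which changes $\widetilde{\mu}(\tilde{y})$ by integers in the appropriate normalization) we may place $\tilde y$ into $\widetilde{M'}$. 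For injectivity, suppose $\tilde x_1, \tilde x_2 \in \widetilde{M'}$ satisfy $\pi(\tilde x_1)=\pi(\tilde x_2)$. Then $\tilde x_2$ is the image of $\tilde x_1$ under some nontrivial deck transformation, which shifts $\widetilde{\mu}$ by a nonzero integer; since $\widetilde{\mu}(\tilde x_i)\in [t_{min},t_{max}]$ and $t_{max}-t_{min}$ equals one period, this can happen only when $\tilde x_1 \in M_-$ and $\tilde x_2 \in M_+$ (or vice versa), and then $\tilde x_1 = \phi(\tilde x_2)$. Thus $\bar\pi$ is a bijection.

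Smoothness of $\bar\pi$ is immediate from smoothness of $\pi$. For smoothness of the inverse, one uses that $\pi$ is a local diffeomorphism (as $p$ is): near any interior point of $\widetilde{M'}$ we can invert $\pi$ locally in $\widetilde{M'}$, and near a point of the glued seam we can use the two local inverses on $M_-$ and $M_+$, which agree on $M'$ precisely because of the identification by $\phi$. Equivariance is automatic: the $S^1$-action on $\widetilde{M}$ is the one pulled back from $M$, hence $\pi$ is $S^1$-equivariant, and since $\phi = \pi|_{M_-}^{-1}\circ\pi|_{M_+}$ is built from $S^1$-equivariant maps, $\phi$ is $S^1$-equivariant, so the quotient inherits the action and $\bar\pi$ intertwines it. Finally, the symplectic form on $M'$ is defined as the one whose pullback under the quotient map $\widetilde{M'} \to M'$ is $\widetilde\omega|_{\widetilde{M'}} = \pi^*\omega|_{\widetilde{M'}}$; this is well-defined precisely because $\phi$ preserves $\widetilde\omega$ (both boundary components are symplectomorphically identified with $(\mu^{-1}(r),\omega|_{\mu^{-1}(r)})$ through $\pi$). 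By construction $\bar\pi^*\omega$ coincides with this reduced form, so $\bar\pi$ is a symplectomorphism.

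The only delicate point, and what I would expect to be the main obstacle to write cleanly, is verifying that $M'$ is a smooth manifold at the seam $M_+ \sim M_-$ and that the glued $2$-form there is smooth and non-degenerate. Both follow from the fact that $\pi$ is a local diffeomorphism in a tubular neighborhood of $\mu^{-1}(r) \subset M$: pulling back a collar of $\mu^{-1}(r)$ via $\pi$ gives a collar neighborhood of $M_-$ in $\widetilde{M}$ and a collar neighborhood of $M_+$ in $\widetilde{M}$, and these two half-collars glue along $\phi$ to a full collar in $M'$ that is mapped diffeomorphically onto the collar in $M$ by $\bar\pi$. Once the local model across the seam is identified with the local model in $M$, smoothness, non-degeneracy, and symplecticity of the glued form are automatic.
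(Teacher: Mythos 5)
Your proof is correct and follows essentially the same route as the paper: the paper's proof consists only of the well-definedness computation $\pi|_{\widetilde{M'}}(\phi(x))=\pi|_{\widetilde{M'}}(x)$ (which you do identically) followed by ``the rest of the proof is straightforward.'' Your proposal supplies the details the paper omits — bijectivity, smoothness across the seam, equivariance, and that $\bar\pi^*\omega$ matches the glued form — all of which are the straightforward verifications the authors had in mind.
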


	\begin{proof}
	To show that $\pi|_{\widetilde{M'}} / \phi$ is well-defined, we only need to check that $\pi|_{\widetilde{M'}}(x) = \pi|_{\widetilde{M'}} (\phi(x))$ for any $x \in M_+$. For $x \in M_+$, $$\pi|_{\widetilde{M'}} (\phi(x)) = \pi|_{\widetilde{M'}} \circ \pi|_{M_-}^{-1} \circ \pi|_{M_+} (x) = $$ $$\pi|_{M_-} \circ \pi|_{M_-}^{-1} \circ \pi|_{M_+} (x) = \pi|_{M_+} (x) = \pi|_{\widetilde{M'}}(x).$$ The rest of the proof is straightforward.
	\end{proof}

	So we can regard the closed non-Hamiltonian $S^1$-manifold $(M,\omega)$ as a pair $(M',\phi)$ where $M'$ is a Hamiltonian $S^1$-manifold with two boundary components $M_+, M_-$ which are the maximum and the minimum of the moment map respectively, and $\phi$ is an $S^1$-equivariant diffeomorphism which induces a symplectomorphism on their reduced spaces.

\section{Symplectic Ruled Surfaces}\label{symplectic ruled surface}

	In this section, let $(M,\omega)$ be a $6$-dimensional closed symplectic $S^1$-manifold with a generalized moment map such that $M_{\red}$ is a ruled surface. A \textit{ruled surface} is an $S^2$-bundle over a compact Riemann surface~$\Sigma_g$ of genus~$g$. Note that the structure group of an oriented $S^2$-bundle over $\Sigma_g$ is $SO(3)$ whose fundamental group is $\Z_2$. So we can easily see that there are only two diffeomorphism types in the set of oriented $S^2$-bundles over $\Sigma_g$: the trivial bundle $\Sigma_g \times S^2$ and the non-trivial one denoted by $E_{\Sigma_g}$. A ruled surface is called \textit{rational} if the base manifold is a sphere, and \textit{irrational} otherwise. A ruled surface with a symplectic structure is called a symplectic ruled surface.

	A symplectic form on a ruled surface is said to be \textit{compatible with the ruling} (or with the fiber) if its restriction to the fiber is non-degenerate. This means that the fiber of a ruled surface is a symplectic submanifold. The following theorem is due to Li and Liu. Using this theorem, we may assume that the fiber of a ruled surface is symplectic.

\begin{theorem}\label{LL}(\cite{LL}, \cite{McS} p.448)
	Let $\pi : M \rightarrow \Sigma_g$ be a smooth $S^2$-bundle over a compact Riemann surface $\Sigma_g$. For any symplectic form $\omega$ on $M$, $(M,\omega)$ is symplectomorphic to $(M,\omega')$ for some symplectic form $\omega'$ which is compatible with the given ruling $\pi$. Moreover, we can assume that this symplectomorphism acts trivially on homology.
\end{theorem}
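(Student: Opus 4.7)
The plan is to reduce the statement to a uniqueness result for symplectic forms in a fixed cohomology class on ruled surfaces. First I would construct some auxiliary symplectic form $\omega'$ on $M$ that is compatible with $\pi$ and lies in the same cohomology class as $\omega$. Then I would invoke Li and Liu's theorem that on a rational or ruled symplectic 4-manifold any two cohomologous symplectic forms are related by a diffeomorphism isotopic to the identity. Such a diffeomorphism is the desired symplectomorphism, and being isotopic to the identity it automatically acts trivially on $H_*(M)$.

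For the existence of a compatible form in the prescribed cohomology class, I would use the Thurston minimal coupling construction. Pick a connection on the oriented $S^2$-bundle $\pi : M \to \Sigma_g$ together with a fiberwise area form; coupling through the connection produces a closed $2$-form $\tau$ whose restriction to each fiber is symplectic. If $\omega_\Sigma$ is an area form on $\Sigma_g$, then for any sufficiently large $K > 0$, the form $\omega' = \tau + K\, \pi^* \omega_\Sigma$ is symplectic and compatible with the ruling. Since $H^2(M;\R)$ is spanned by the Poincar\'e duals of the fiber class $F$ and a section class $S$, rescaling the fiberwise form and adjusting $K$ pins down $\langle [\omega'], [F]\rangle$ and $\langle [\omega'], [S]\rangle$, and hence $[\omega']$. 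The only input one needs is that $\langle [\omega], [F]\rangle \neq 0$ (and, after possibly reversing the fiber orientation, positive); this is automatic on a ruled surface because for any $\omega$-tame almost complex structure a Gromov-compactness argument produces a pseudoholomorphic representative of $F$.

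The genuine difficulty is the uniqueness step. Two cohomologous symplectic forms on $M$ need not be connected by a convex path, so Moser's theorem does not apply directly, and this is where the bulk of the work lies. Li and Liu's proof proceeds through $J$-holomorphic curve technology: for every $\omega$-tame $J$, positivity of intersections and the low degree of the fiber class $F$ prevent bubbling, so the family of fiber representatives varies smoothly along a path of tame almost complex structures. This produces a path of ruled structures interpolating $(M,\omega)$ and $(M,\omega')$, which together with an inflation argument and a piecewise Moser stability argument yields the required diffeomorphism isotopic to the identity. This pseudoholomorphic-curve machinery, which is peculiar to rational and ruled surfaces and has no higher-dimensional analogue, is the step I expect to be the main obstacle.
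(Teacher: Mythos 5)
The paper does not prove this statement; it quotes it from Li--Liu and from McDuff--Salamon, so there is no in-paper proof to compare against. With that understood, the logical structure of your outline is circular. The uniqueness statement you invoke in the second step --- that on a ruled surface any two cohomologous symplectic forms are related by a diffeomorphism isotopic to the identity --- is, in this paper, essentially Corollary~\ref{combine}, and there it is deduced by \emph{combining} Theorem~\ref{LL} (the very statement you are trying to prove) with Lalonde--McDuff's Theorem~\ref{LM} for forms that are already compatible with a ruling. Theorem~\ref{LM} alone does not apply here, because it requires both forms to be compatible, and the given $\omega$ is not assumed to be. So your step~2 assumes what is to be proved.

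The non-circular route is the one you actually gesture at when sketching ``Li and Liu's proof.'' For an $\omega$-tame $J$, Gromov compactness together with the minimality of the fiber class (which prevents bubbling) and positivity of intersections produces a foliation of $M$ by embedded $J$-holomorphic spheres in the fiber class. This foliation is a ruling $\pi_J$ of $M$ that is compatible with $\omega$ by construction. Since oriented $S^2$-bundles over $\Sigma_g$ with the same fiber class are classified by $H^2(\Sigma_g;\Z/2)$, the ruling $\pi_J$ is smoothly isotopic to the given $\pi$, and pushing $\omega$ through such an isotopy yields $\omega'$ and the required symplectomorphism, which is isotopic to the identity and hence trivial on homology. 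In this formulation the Thurston coupling step and the reduction to a uniqueness theorem are both unnecessary. One further small inaccuracy: the claim that ``rescaling the fiberwise form and adjusting $K$'' realizes an arbitrary admissible class $[\omega]$ overstates what the naive Thurston construction with a fixed connection gives; one typically needs inflation or McDuff's refinements to reach all admissible classes, cf.\ Theorem~\ref{McD}.
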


	Let $(M_{\red}, \omega_{\red})$ be a symplectic ruled surface. We choose a basis of $H^2(M_{\red},\Z)$ as follows. When $M_{\red}$ is trivial, let $u$ be the dual of the class represented by a symplectic fiber $pt \times S^2$, and let $v$ be the dual of the class represented by a trivial section $\Sigma_g \times pt$ such that $\int_{M_{\red}} uv = 1$. For the nontrivial case, let $u$ be the dual of the class represented by a symplectic fiber and $v$ be the dual of the class represented by a section with self-intersection $-1$ with $\int_{M_{\red}} uv = 1$. Note that $\int_{M_{\red}} u^2 = \int_{M_{\red}} v^2 = 0$ and $\int_{M_{\red}} uv = 1$ in the trivial case,  and $\int_{M_{\red}} u^2 = 0$, $\int_{M_{\red}} v^2 = -1$ and $\int_{M_{\red}} uv = 1$ in the non-trivial case.

	We compute the first Chern class $c_1(M_{\red})$ as follows. Let $c_1(M_{\red}) = xu + yv$ for some $x,y \in \Z$. Let $F \cong S^2$ be the fiber representing the dual of $u$, and $B \cong \Sigma_g$ be the section of $M_{\red}$ representing the dual of $v$.
	
	In the trivial case $M_{\red} \cong \Sigma_g \times S^2$, the normal bundles $\nu(F)$ and $\nu(B)$ in $M$ are trivial.
\[\int_F c_1(M_{\red})|_F = \int_{M_{\red}} c_1(M_{\red}) \cdot u = \int_{M_{\red}}(xu + yv) \cdot u = y.\]
On the other hand, $c_1(M_{\red})|_F = c_1(TF \oplus \nu(F)) = c_1(TF) = c_1(S^2)$. Therefore $y = \int_F c_1(M_{\red})|_F = 2$. Similarly
\[\int_B c_1(M_{\red})|_B = \int_{M_{\red}} c_1(M_{\red}) \cdot v = \int_{M_{\red}}(xu + yv) \cdot v = x.\]
On the other hand, $c_1(M_{\red})|_B = c_1(TB \oplus \nu(B)) = c_1(TB) = c_1(\Sigma_g)$. Therefore $x = \int_B c_1(M_{\red})|_B = 2-2g$. Hence
\begin{equation}\label{chern}
	c_1(M_{\red}) = (2-2g)u + 2v.
\end{equation}
	In the non-trivial case $M_{\red} \cong E_{\Sigma_g}$, the normal bundle $\nu(F)$ is trivial, but the normal bundle $\nu(B)$ is the complex line bundle whose first Chern number is equal to $-1$. A similar computation shows that $$2 = \int_F c_1(M_{\red})|_F =  \int_{M_{\red}} c_1(M_{\red}) \cdot u = y.$$ Since $\int_B c_1(M_{\red})|_B = \int_B c_1(TB \oplus \nu(B)) = 1-2g$, $$ 1-2g = \int_B c_1(M_{\red})|_B =  \int_{M_{\red}} c_1(M_{\red}) \cdot v = x-y.$$ Therefore $x = 3-2g$ and
\begin{equation}\label{chern2}
	c_1(M_{\red}) = (3-2g)u + 2v.
\end{equation}

	Now, assume that $(M,\omega)$ is a $6$-dimensional closed non-Hamiltonian symplectic semifree $S^1$-manifold with a generalized moment map $\mu$. Assume $M^{S^1} = \coprod_{i=1}^k X_i$ where $X_i$'s are connected surfaces for all $i$, and assume that the reduced space $M_{\red}$ is a ruled surface. By Proposition \ref{cor}, identify $(M,\omega)$ with $(M',\phi)$. Here, $M'$ is a Hamiltonian $S^1$-manifold with two boundary components $M_+$ and $M_-$ with $(M')^{S^1} = \coprod_{i=1}^k X_i$, and $\phi : M_+ \rightarrow M_-$ is an $S^1$-equivariant diffeomorphism described as in Proposition \ref{cor}. Because $M_{\red}$ is a ruled surface, $M_+$ and $M_-$ are $S^1$-bundles over the ruled surface $M_{\red}$.

	Let $e_-$, respectively $e_+$, be the Euler class of the $S^1$-bundle $M_-$, respectively $M_+$. The $S^1$-equivariant diffeomorphism $\phi : M_+ \rightarrow M_-$ induces a symplectomorphism $M_+/S^1 \rightarrow M_-/S^1$, still denoted by $\phi$. Then $\phi^* e_- = e_+$ and the equation~\eqref{class} implies
\begin{equation}\label{gluing}
	\phi^*e_- = e_- + \sum_{i}{D(X_i)}
\end{equation}
In particular, if $\phi$ acts on $H^*(M_{\red})$ trivially, then $\sum_{i}{D(X_i)} = 0$.

\begin{proposition}\label{attach}
	Let $M_{\red}$ be a ruled surface with basis $\{u, v\}$ of $H^2(M_{\red}, \Z)$ as above. Let $\omega_1$ and $\omega_2$ be two symplectic forms on $M_{\red}$ which are both compatible with the ruling. Let $\psi : (M_{\red}, \omega_1) \rightarrow (M_{\red}, \omega_2)$ be a symplectomorphism. Then $\psi$ acts trivially on $H^2(M_{\red}, \Z)$ if $M_{\red} \ncong S^2 \times S^2$. If $M_{\red} \cong S^2 \times S^2$, then $\psi$ acts on $H^2(M_{\red}, \Z)$ either trivially, or $\psi^*u = v$, $\psi^*v = u$.
\end{proposition}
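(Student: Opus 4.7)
The plan is to write $\psi^*u=au+bv$, $\psi^*v=cu+dv$ with $a,b,c,d\in\Z$, and to extract three constraints from the hypothesis that $\psi\colon (M_{red},\omega_1)\to(M_{red},\omega_2)$ is a symplectomorphism: (i) $\psi$ is an orientation-preserving diffeomorphism, so $\psi^*$ preserves the intersection pairing on $H^2(M_{red},\Z)$; (ii) $\psi$ is a diffeomorphism, so $\psi^*c_1(M_{red})=c_1(M_{red})$, with $c_1$ as computed in (\ref{chern}) and (\ref{chern2}); and (iii) $\psi^*[\omega_2]=[\omega_1]$, where each $[\omega_i]=\alpha_i u+\beta_i v$ has $\alpha_i,\beta_i>0$ by ruling-compatibility.

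First I would impose (i). In the trivial case the intersection form is $\bigl(\begin{smallmatrix}0&1\\1&0\end{smallmatrix}\bigr)$, yielding $ab=cd=0$ and $ad+bc=1$, whose only integer solutions are $\pm I$ and $\pm S$ where $S=\bigl(\begin{smallmatrix}0&1\\1&0\end{smallmatrix}\bigr)$ is the swap. In the non-trivial case the form is $\bigl(\begin{smallmatrix}0&1\\1&-1\end{smallmatrix}\bigr)$ and the resulting quadratic system leaves exactly $\pm I$ together with $\pm T$, where $T=\bigl(\begin{smallmatrix}1&0\\2&-1\end{smallmatrix}\bigr)$; here the second branch $b=2a$ of the equation $b(2a-b)=0$ must not be overlooked.

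Next I would apply (ii). The $v$-coefficient of $c_1$ equals $2$ in both cases, so $-I$ is ruled out at once. In the non-trivial case, imposing $\psi^*c_1=c_1$ on $\pm T$ forces $g=1/2$ or $g=3/2$, neither of which is an integer, so only the identity survives, which already gives the proposition for non-trivial bundles. In the trivial case, the swap $S$ survives precisely when $g=0$, while the anti-swap $-S$ survives precisely when $g=2$.

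The only remaining obstacle is the spurious $-S$ solution at $g=2$, and this is exactly what (iii) is designed to kill: $\psi^*[\omega_2]=-\beta_2 u-\alpha_2 v$ has negative coefficients and so cannot equal $[\omega_1]=\alpha_1 u+\beta_1 v$. This completes the case analysis and yields the proposition. No real difficulty arises beyond enumerating the integer solutions carefully; once those are in hand, $c_1$ kills most branches and the positivity of $[\omega_i]$ cleans up the last one.
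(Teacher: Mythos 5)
Your proof is correct and takes essentially the same approach as the paper: enumerate the integral automorphisms of the intersection form, then cut down using the first Chern class and positivity of symplectic area. Your constraint $\psi^*c_1=c_1$ is, once intersection-form preservation is in hand, logically equivalent to the paper's application of the adjunction formula to the symplectic representatives of $u$ and $v$ (the paper kills $-S$ directly by positivity rather than routing through $g=2$, but both work). One small caveat in your step (ii): a diffeomorphism alone does not preserve $c_1$; what you actually need is that $\psi$ is a \emph{symplectomorphism}, so $\psi^*c_1(M_{red},\omega_2)=c_1(M_{red},\omega_1)$, together with the observation that both Chern classes agree with the class computed in (\ref{chern})/(\ref{chern2}) because $\omega_1$ and $\omega_2$ are both compatible with the ruling (this last point, relying on Theorem \ref{LL}, is also used implicitly in the paper's adjunction argument).
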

\begin{proof}
	First assume $M_{\red} \cong E_{\Sigma_g}$ is a non-trivial $S^2$-bundle. Then $u^2 = 0$, $v^2 = -1$, $uv = 1$. Let $\psi^* u = au + bv$ and $\psi^* v = cu + dv$. Since $\psi$ is a symplectomorphism, it preserves the intersection form. Hence $\psi^* u^2 = (au+bv)^2 = 0$, $\psi^* v^2 = (cu+dv)^2 = -1$, and $\psi^* u \cdot \psi^* v = (au+bv)(cu+dv) = 1$. The possible integral solutions are $(a,b,c,d) = (1,0,0,1), (-1,-2,0,1), (-1,0,0,-1)$, and $(1,2,0,-1)$. Note that for any $2$-dimensional submanifold $Z \subset M_{\red}$, genus($Z$) = genus($\psi(Z)$).

	Assume $(a,b,c,d) = (-1,-2,0,1)$, i.e., $\psi^* u = -u - 2v$ and $\psi^* v = v$. Let $Z$ be a symplectic $2$-sphere which represents $u$. Apply the adjunction formula to the 2-sphere $Z$ so that
\begin{equation}\label{adjunc}
	\langle \psi^* u \cdot \psi^* u, [M_{\red}] \rangle - \langle c_1(M_{\red}) \cdot \psi^* u , [M_{\red}] \rangle = 2g(\psi(Z)) - 2 = -2.
\end{equation}
Here, $\textrm{LHS} = (-u-2v)^2 - ((3-2g)u + 2v)(-u -2v) = 4-4g$, which is impossible since $g$ is an integer. Therefore $(a,b,c,d) \neq (-1,-2,0,1)$. Similarly $(a,b,c,d) = (1,2,0,1)$ and $(a,b,c,d) = (-1,0,0,-1)$ are not possible. This proves the result when $M_{\red} \cong E_{\Sigma_g}$.

	Now assume $M_{\red} \cong \Sigma_g \times S^2$. Then $u^2 = 0$, $v^2 = 0$, and $uv = 1$. Hence $\psi^* u^2 = (au+bv)^2 = 0$, $\psi^* v^2 = (cu+dv)^2 = 0$, and $\psi^* u \cdot \psi^* v = (au+bv)(cu+dv) = 1$. Equivalently, $ab = 0$, $cd = 0$, and $ad + bc = 1$. The only possible integral solutions are $(a,b,c,d) = (0,\pm 1, \pm 1, 0), (\pm 1, 0, 0, \pm 1)$, i.e. $\psi ^* u = \pm v, \psi^* v = \pm u$, or $\psi^* u = \pm u, \psi^* v = \pm v$.

	If $(a,b,c,d)$ is $(0,-1,-1,0)$ or $(-1,0,0,-1)$, then the fiber cannot be symplectic with respect to $\omega_2$. So, $(a,b,c,d) \neq (0,-1,-1,0), (-1,0,0,-1)$. Let $Z$ be a symplectic surface of genus $g$ which represents $v$. If $(a,b,c,d) = (0,1,1,0)$, then $\psi ^* u = v, \psi^* v =  u$. By applying the adjunction formula to the surface $\psi(Z)$,
\[\langle \psi^* v \cdot \psi^* v, [M_{\red}] \rangle - \langle c_1(M_{\red}) \cdot \psi^* v , [M_{\red}] \rangle = 2g(\psi(Z)) - 2 = 2g - 2.\]
$\textrm{LHS} = u^2 - ((2-2g)u + 2v)u = -2$. Therefore we get $g=0$. Consequently $\psi^* = id$ if $g \geq 1$, and $\psi^* = id$ or $\psi^*u=v$ and $\psi^*v = u$ if $g=0$.
\end{proof}

	If $\phi$ acts non-trivially on $H^2(M_{\red})$, we say that $M$ is \textit{twisted}. If $M_{\red}$ is a ruled surface not diffeomorphic to $S^2 \times S^2$, we can parametrize the cohomology classes of the reduced symplectic forms $[\omega_t] = c_tu + d_tv$ by $t \in S^1$ and $c_t, d_t$ are continuous on $S^1$.

	If $M_{\red} \cong S^2 \times S^2$, we can also parametrize the cohomology classes of the reduced symplectic forms $[\omega_t] = c_tu + d_tv$ by $t \in S^1 $. But in this case, $c_t$ and~$d_t$ may not be continuous at one point when $M$ is twisted.

	For each fixed component $X_i$, if we write $D(X_i) = a_iu + b_iv$ for $a_i, b_i \in \Z$, then
\begin{equation}\label{zerosum}
	\sum_i a_i = \sum_i b_i = 0
\end{equation}
if $M_{\red} \ncong S^2 \times S^2$ or $M_{\red} \cong S^2 \times S^2$ but $M$ is not twisted.

	When $M_{\red} \cong S^2 \times S^2$ and $M$ is twisted, let $e_- = au + bv$. By~\eqref{gluing} and Proposition~\ref{attach}, we have
\begin{equation}\label{plus}
	bu + av = au + bv + \sum_i D(X_i) = (a+\sum_i a_i)u + (b+\sum_i b_i)v.
\end{equation}
Hence from \eqref{zerosum} and \eqref{plus}, we always have the following equation
\begin{equation}\label{zerosum2}
	\sum_i (a_i + b_i) = 0.
\end{equation}

\section{Proof of Theorem \ref{main-}}\label{proof of 1.2}

We start with the following theorem due to \cite{Lin}.
\begin{theorem}\label{Lin}\cite{Lin}
	Assume that the action of a torus $T$ on a connected closed symplectic manifold $M$ is an effective Hamiltonian action of complexity two with moment map $\Phi : M \rightarrow \mathfrak{t}^{*}$. And assume that for any regular value $\xi \in \mathfrak{t}^{*}$ of $\Phi$, the symplectic reduced space $M_{\xi} = \Phi^{-1}(\xi)/T$ has $b_2^{+} = 1$. Then the Duistermaat-Heckman function $f$ is log-concave.
\end{theorem}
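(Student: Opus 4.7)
Since the torus action has complexity two, each symplectic reduced space $M_{\xi}$ at a regular value $\xi$ is a closed $4$-manifold, and the Duistermaat-Heckman function is
\[
f(\xi) = \int_{M_\xi} \frac{\omega_\xi^2}{2} = \tfrac{1}{2}\langle [\omega_\xi]\smile[\omega_\xi], [M_\xi]\rangle .
\]
The plan is to obtain log-concavity from a reverse Cauchy-Schwarz inequality for Lorentzian quadratic forms, applied to the intersection form on $H^2(M_\xi;\R)$, and then to propagate this across walls of the moment polytope by continuity. Since log-concavity of a continuous positive function on a convex set is equivalent to log-concavity along every affine line, I would fix an arbitrary line $\ell \subset \mathfrak t^*$ meeting the moment polytope and work with $t\mapsto f(\xi_0+t\eta)$.

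On each open chamber $C$ of regular values, the reduced spaces $\{M_\xi\}_{\xi\in C}$ are canonically diffeomorphic via the gradient flow, so I may identify their second cohomology with a fixed $H^2(M_C;\R)$ equipped with a fixed intersection form $Q$. By the Duistermaat-Heckman theorem, $\alpha(\xi):=[\omega_\xi]$ depends affinely on $\xi\in C$, and thus $f(\xi)=\tfrac{1}{2}Q(\alpha(\xi),\alpha(\xi))$ is a quadratic polynomial on $C$. The assumption $b_2^+(M_\xi)=1$ says that $Q$ has signature $(1,b_2^-(M_C))$, so it is Lorentzian; since $Q(\alpha(\xi))=2f(\xi)>0$, the classes $\alpha(\xi)$ lie in the positive cone, and continuity of $\alpha$ in $\xi$ places them all in one connected component of this cone. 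The reverse Cauchy-Schwarz inequality for Lorentzian forms,
\[
Q(\alpha,\beta)\;\ge\;\sqrt{Q(\alpha,\alpha)\,Q(\beta,\beta)},
\]
valid for $\alpha,\beta$ in the same component of the positive cone, then yields, by expanding $Q((1-s)\alpha(t_1)+s\alpha(t_2))$ and taking square roots, that $\sqrt{f}$ is concave on $\ell\cap C$. Since a positive concave function is log-concave by the AM-GM inequality, $f$ is log-concave on each chamber.

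The remaining task, and the main obstacle, is to glue these chamber-wise estimates into global log-concavity across the walls of the chamber decomposition. The DH function $f$ is continuous on the moment polytope and polynomial on each chamber, so along the line $\ell$ it is a continuous piecewise polynomial $f(t)$; concavity of $\sqrt{f}$ on each subinterval together with continuity does not automatically imply concavity of $\sqrt{f}$ on the whole line, because the one-sided derivatives at a wall crossing must satisfy $(\sqrt{f})'(t^-)\ge(\sqrt{f})'(t^+)$. To establish this inequality I would use the Duistermaat-Heckman wall-crossing formula, which expresses the jump of the Euler/Chern class of the reduced bundle in terms of the Poincar\'e duals of the critical submanifolds sitting over the wall (analogous to equation~(\ref{class}) in the $S^1$ case). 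These duals lie in the closure of the forward positive cone, which is precisely the hypothesis needed to make the Lorentzian inequality propagate: taking a common limit of $\alpha(\xi)$ from either side of the wall in a fixed ambient $H^2$, one verifies that the right-hand derivative of $\sqrt{Q(\alpha)}$ cannot exceed the left-hand one. This last step is where the $b_2^+=1$ hypothesis is used a second time, now to control the sign of the cross term $Q(\alpha,\Delta\alpha)$ across the wall.

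Once the one-sided derivative condition is established at every wall on $\ell$, concavity of $\sqrt{f}$ along $\ell$ follows, hence log-concavity of $f$ along $\ell$, and since $\ell$ was arbitrary, the theorem is proved.
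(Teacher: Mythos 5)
Your chamber-wise argument is essentially the paper's: both express $f(\xi)=\tfrac12\int_{M_\xi}\alpha(\xi)^2$ on an open chamber where $\alpha(\xi)$ is affine and exploit that $b_2^+=1$ makes the intersection form Lorentzian. The paper does this by picking an orthogonal basis and computing $f''f-f'^2\le 0$; you phrase it as the reverse Cauchy--Schwarz inequality giving concavity of $\sqrt{f}$, which is equivalent but slightly cleaner. So far so good.

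The wall-crossing step is where your proposal has a genuine gap, and it is not a minor one. You assert that the Poincar\'e duals of the critical submanifolds sitting over a wall ``lie in the closure of the forward positive cone.'' This is false in general: a symplectically embedded sphere of self-intersection $-1$ (an exceptional sphere) has Poincar\'e dual of negative square, hence is nowhere near the closure of the positive cone, yet such surfaces do occur as fixed components in the relevant examples (the paper's own $D(X_i)=a_iu+b_iv$ can have $D(X_i)^2<0$, e.g.\ $v$ itself on a Hirzebruch surface). What the one-sided derivative comparison for $\sqrt{Q(\alpha)}$ actually requires, in the case where the reduced space does not change diffeomorphism type across the wall, is only the inequality $Q(\alpha(c),D(X_i))>0$, and that follows not from a positive-cone hypothesis but from the fact that $X_i$ is symplectic in $(M_c,\omega_c)$ (the paper's inequality~(\ref{pos})). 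Note also that this step does not re-use $b_2^+=1$ in the way you claim.

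A second, unaddressed issue is that the reduced space does change topology when the critical set over the wall contains components of maximal codimension (isolated fixed points for $S^1$, which produce a blow-up on one side): then there is no fixed ambient $H^2$ in which to ``take a common limit of $\alpha(\xi)$ from either side,'' and your derivative comparison does not make sense as stated. This is precisely the delicate case, and it is why the paper does not attempt to prove the wall-crossing inequality directly but instead cites W.~Graham~\cite{Gr} for log-concavity near critical values. (There is also the fact that for a general complexity-two torus action the regular reduced spaces are symplectic orbifolds rather than manifolds, so the intersection-form argument must be run in orbifold cohomology; Lin~\cite{Lin} handles this, and it is worth flagging even though it does not change the Lorentzian algebra.) To repair your proof you would need to either reproduce Graham's estimate near the walls or restrict attention to walls of codimension-four fixed sets and cite the blow-up case separately.
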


	Recall that the Duistermaat-Heckman function $f : \mathfrak{t}^{*} \rightarrow \R$ is defined to be the symplectic volume $f(\xi) = \int_{M_{\xi}} \omega_{\xi}^{\dim M_{\xi}/2}$, where $M_{\xi}$ is the reduced space at level~$\xi$ and $\omega_{\xi}$ is the corresponding reduced symplectic form. Duistermaat and Heckman proved that $f$ is a piecewise polynomial function and that it is a polynomial on any open region $U \in \mathfrak{t}^{*}$ consisting of regular values of $\Phi$. We sketch the proof of Theorem~\ref{Lin} in the case when $T = S^1$.

	Let $I$ be an open interval consisting of regular values of~$\Phi$ and let $e$ be the Euler class of the $S^1$-fibration over $M_{\xi}$ for $\xi \in I$. Note that $e$ does not depend on the choice of $\xi \in I$. Choose a fixed $\xi_0 \in I$. By \eqref{DH}, $[\omega_{\xi}] = [\omega_{\xi_0}] - e(\xi - \xi_0)$. By assumption, the reduced space $M_{\xi}$ is of dimension~$4$, so the function~$f$ is given by $f(\xi) = \int_{M_{\xi}} ([\omega_{\xi_0}] - e(\xi - \xi_0))^2$. The condition $b_2^{+} = 1$ implies that there is a basis $\{u, v_1, ... , v_k\}$ on $H^2(M_{\xi}, \R)$ such that $\int_{M_{\xi}}u^2 = 1$, $\int_{M_{\xi}}v_i^2 = -1$, and $\int_{M_{\xi}}uv_i = \int_{M_{\xi}}v_iv_j = 0$ for all $i \neq j$. After expressing  $[\omega_{\xi_0}]$ and $e$ in this basis, we can prove that $f''f - f'^2 \leq 0$, so $\log f$ is concave on $I$. On the other hand, an argument of Graham~\cite[Section~3]{Gr} proves that log-concavity holds near a neighborhood of the critical values. See Theorem~\ref{3.2}. This proves that $\log f$ is concave on~$\mathfrak{t}^*$. For more details, see \cite{Lin}.

    Let $(M,\omega)$ be a 6-dimensional closed non-Hamiltonian symplectic $S^1$-manifold with a generalized moment map $\Phi : M \rightarrow S^1$. If the fixed point set $M^{S^1}$ is non-empty, Theorem \ref{main-} states that the reduced space $M_{\xi} = \Phi^{-1}(\xi) / S^1$ should have $b_2^{+} \geq 2$ for any choice of the generalized moment map $\Phi$ and the regular value $\xi$. Now we start the proof of Theorem~\ref{main-} with the following theorem due to Guillemin, Lerman, and Sternberg.

\begin{theorem}\cite[Section~5]{GLS}\label{3.2}
	Let $c$ be a critical value of the moment map~$\Phi : M \rightarrow \R$. The Duistermaat-Heckman function $f(t)$ at $t>c$ near~$c$ is given by
\[f(t) - f_-(c) = \sum_{i} \frac{\mathrm{vol}(C_i)}{(d_i-1)! \prod_j w_{ij}} (t-c)^{d_i-1} + O\left((t-c)^{d_i}\right),\]
where $f_-(c)$ denotes the limit from the left, the sum is taken over all components $C_i$ of $M^{S^1} \cap \Phi^{-1}(c)$, $d_i$ is half the real codimension of $C_i$ in~$M$, and $w_{ij}$ are the weights of the $S^1$-representation on the normal bundle of $C_i$. The symplectic volume $\mathrm{vol}(C_i)$ is defined to be $1$ if $C_i$ is a point.
\end{theorem}

\begin{proof}[Proof of Theorem \ref{main-}]
	The only if part was proved in Section~5 in~\cite{Lin}. The idea is that the reduced space near the minimum is either a weighted projective space $\CP^2$ or a weighted $\CP^1$-bundle over the minimum fixed component, and $b_2^+(M_{\xi})$ does not change when $\xi$ passes through a critical level.

	Conversely, assume that the action is non-Hamiltonian so that there is a generalized moment map $\Phi : M \rightarrow S^1$ which is surjective. Also, assume that $b_2^+(M_{\xi}) = 1$ for a regular value $\xi$ of $\Phi$. Since log-concavity of a real-valued function is a local property, we may apply Theorem \ref{Lin} to $\Phi$ so that the Duistermaat-Heckman function $f : S^1 \rightarrow \R$ with respect to $\Phi$ is log-concave on~$S^1$. Since every periodic concave function is constant, $\log{f}$ is constant so that $f$ is a constant function on $S^1$. Hence it is enough to show that $f$ can never be a constant function if the fixed point set $M^{S^1}$ is non-empty.

Assume that there is some fixed surfaces $Z_1, \cdots, Z_k$ in $M^{S^1}$ with $\Phi(Z_i) = c$. By Theorem \ref{3.2}, the Duistermaat-Heckman function on $(c, c + \epsilon)$ is a polynomial function which is of the form $$ f(t) = \sum_{i} \frac{\mathrm{vol}(Z_i)}{\prod_j w_{ij}} (t-c) + O\left((t-c)^{2}\right). $$
Since the action is assumed to be non-Hamiltonian, every index of $Z_i$ is two so that the coefficient of $t$ is negative. In particular, the Duistermaat-Heckman function $f(t)$ is not constant on $(c, c + \epsilon)$. Hence it is a contradiction.

Now assume that all fixed components are isolated points. Since the action is assumed to be non-Hamiltonian, each fixed point has index $2$ or $4$. Let $c \in S^1$ be a critical value of $\Phi$ and let $\{z_1, \cdots, z_k\}$ be the set of all fixed points which lie on $\Phi^{-1}(c)$. By Theorem \ref{3.2}, the Duistermaat-Heckman function on $(c, c + \epsilon)$ is a polynomial function which is of the form $$ f(t) = \sum_{i} \frac{1}{\prod_j w_{ij}} (t-c)^2 + O\left((t-c)^{3}\right).$$
Note that $\prod_j w_{ij}$ is positive (negative, respectively) if $z_i$ is a fixed point of index four (two, respectively). So it seems to be possible that the coefficient of $t$ would be zero if the set $\{z_1, \cdots, z_k\}$ contains index-two points and index-four points simultaneously. To resolve this situation, set $z = z_1$ and let's think of a new symplectic $S^1$-manifold $(\widetilde{M}, \widetilde{\omega})$, which is a sufficiently small $S^1$-equivariant symplectic blow up at $z$ with an exceptional divisor $E$. Note that the symplectic blow up keeps unchanged everything away from the blown up point. Therefore, $\Phi$ induces a generalized moment map $\widetilde{\Phi} : \widetilde{M} \rightarrow S^1$, which coincides with $\Phi$ when we restricts $\widetilde{\Phi}$ to the complement of $E$. In particular, the Duistermaat-Heckman function $\widetilde{f}$ for $\widetilde{\Phi}$ is log-concave and satisfies Theorem~\ref{3.2} so that $\widetilde{f}$ should be constant. The fixed point set lying on the exceptional divisor $E$ consists of either three points or one point and one sphere. In any case, $E$ contains an isolated fixed point $p$ with $\widetilde{\Phi}(p) \neq c$. Since the value $\widetilde{\Phi}(p)$ depends on the size of $E$, we can choose the size of $E$ small enough so that the level set $\widetilde{\Phi}^{-1}(\widetilde{\Phi}(p))$ does not contain any other fixed point except for $p$. If we apply Theorem \ref{3.2} to $p$, then the coefficient of $t^2$ on $(\Phi(p), \Phi(p) + \epsilon)$ is non-zero for any small $\epsilon$, which contradicts that $\widetilde{f}$ is a constant. It finishes the proof.
\end{proof}

\section{Proof of Theorem \ref{main} : The trivial fibration cases}\label{Semifree $S^1$ actions}

    We first prove the following proposition which is the first statement of Theorem~\ref{main} for the trivial fibration case using a different approach from Theorem~\ref{main-}.

\begin{proposition}\label{thm1}
	Let $(M,\omega)$ be a $6$-dimensional closed symplectic semifree $S^1$-manifold with non-empty fixed point set $M^{S^1}$ consisting of surfaces. Let $\mu : M \rightarrow S^1$ be a generalized moment map. If $M_{\red}$ is diffeomorphic to $\Sigma_g \times S^2$, then the action is Hamiltonian.
\end{proposition}

\begin{proof}
	Recall that the orientations on~$M$ and~$M_{\red}$ are given by~$\omega$ so that $\omega^3$, respectively~$\omega_{\red}^2$, represents a positive volume form on~$M$, respectively~$M_{\red}$. Let $M^{S^1}=\coprod_{i=1}^k X_i$ where $X_i$ are connected surfaces. Since the reduced spaces are all diffeomorphic to $\Sigma_g \times S^2$, for each $t\in \mu(M) \subset S^1$, we can identify $\mu^{-1}(t) / S^1$ with $(\Sigma_g \times S^2, \omega_t)$ where $\omega_t$ is the reduced symplectic form at level~$t$. Let $u$ and $v$ be the basis of $H^2(M_{\red},\Z)$ as chosen in Section~\ref{symplectic ruled surface} after Theorem~\ref{LL}. Then $u^2 = 0$, $uv = 1$, and $v^2=0$. Now the cohomology class of $\omega_t$ can be written as $[\omega_t] = c_t u + d_t v \in H^2(M_{\red},\R)$ for $c_t$,~$d_t \in \R$. Note that if $\mu$ is surjective, as was mentioned in Section~\ref{symplectic ruled surface}, $c_t$ and $d_t$ are continuous on $S^1$ if $M_{\red} \ncong S^2 \times S^2$, and they may be discontinuous at at most one point in~$S^1$ if $M_{\red} \cong S^2 \times S^2$. Since $\int_{M_t}[\omega_t]^2>0$, we have $c_t d_t > 0$ for all $t\in \textrm{int}({\mu(M)}) \subset S^1$. Therefore $c_t$ and $d_t$ are non-vanishing in the interior of~$\mu(M)$, and since $\mu(M)$ is connected, the signs of $c_t$ and $d_t$ in $\textrm{int}({\mu(M)})$ do not change. By Theorem~\ref{LL}, $(M_t,\omega_t)$ is symplectomorphic to some $(M_{\red},\omega')$ which is compatible with the fiber and $[\omega_t] = [\omega'] \in H^2(M_{\red})$, so $\int_{pt \times S^2} \omega_t = d_t > 0$. Therefore we get
\begin{equation}\label{form}
	c_t > 0,\quad d_t > 0,\quad \forall t\in \textrm{int}(\mu(M)).
\end{equation}

	For a fixed component~$X_i$, let $D(X_i) = a_iu + b_iv$ where $a_i, b_i \in \Z$. Let $s=\mu(X_i)$ be a critical value. Then $X_i$ is a symplectically embedded surface in $(\Sigma_g \times S^2, \omega_s)$. By Poincar\'{e} duality,
\begin{equation}\label{pos}
	\int_{X_i} \omega_s  = \int_{M_{\red}} D(X_i) \cdot \omega_s
	= a_id_s + b_i c_s> 0.
\end{equation}
In particular, at least one of $a_i$ and $b_i$ should be positive.

	By \eqref{chern}, the first Chern class is $c_1(M_{\red}) = (2-2g)u + 2v$. By the adjunction formula~\eqref{adjunc}, $D(X_i)^2 - c_1(M_{\red})\cdot D(X_i) = 2g_i -2$ where $g_i$ is the genus of the embedded surface~$X_i$. Hence
\[(a_i u + b_i v)^2 - (a_i u+b_i v)((2-2g)u+2v) + 2 = 2g_i.\]
Therefore, we get
\begin{equation}\label{ad}
	(a_i-(1-g))(b_i-1) = g_i-g.
\end{equation}
	\\
	\textbf{CASE 1: $g \geq 1$}
	\\

	If $g \neq g_i$, by \eqref{ad}, $b_i \neq 1$ and $a_i = 1 + \frac{g_i}{b_i-1} - \frac{b_i}{b_i-1}g$. If we assume $b_i<0$, then $a_i<1$ and thus $a_i$ is non-positive, which contradicts \eqref{pos}. Therefore, $b_i \geq 0$. If $g = g_i$, then $a_i = 1-g$ or $b_i = 1$. Since $g \geq 1$, we have $a_i \leq 0$ or $b_i = 1$. By~\eqref{pos}, $b_i \geq 0$. In any case, $b_i$ is non-negative for each $i$. If we assume that the given action is non-Hamiltonian, then $b_i=0$ for all $i$ by~\eqref{zerosum}. Hence by~\eqref{pos}, all $a_i$ should be positive, which contradicts \eqref{zerosum}.
	\\
	\\
	\textbf{CASE 2: $g = 0$}
	\\

	In this case, $M_{\red} \cong S^2 \times S^2$. As discussed in Section~\ref{symplectic ruled surface}, $M$ can be twisted. So, the parametrization of $\omega_t$ may not be continuous at one regular value $r_0 \in S^1$. Define a function $f$ on $\mu(M)$ by
\begin{equation}\label{function}
f(t) = \frac{d}{dt}[\min(c_t,d_t)].
\end{equation}
Even though $c_t$ and $d_t$ might not be continuous at~$r_0$ for twisted~$M$, $f$ is well-defined at $r_0$. Note that from \eqref{DH}, we have
\begin{equation}\label{funct}
	-e_t = \frac{d}{dt}c_tu + \frac{d}{dt}d_tv.
\end{equation}
We claim that $f$ is a non-increasing step function defined on a finite complement of $\mu(M)$. This is divided into Lemma~\ref{lemma1} to~\ref{lemma5}

\begin{lemma}\label{lemma1}
	The function~$f$ is locally constant, and defined on $\mu(M)$ except possibly at finitely many points. In particular if $f$ is not defined at a point~$r$ in a regular interval $(\alpha, \beta) \subset \mu(M)$, then $f$ is well-defined on $(\alpha, \beta) - \{r\}$.
\end{lemma}

\begin{proof}
	The functions $c_t$ and $d_t$ are linear on~$t$ in every regular interval of $\mu(M) \subset S^1$ by the Duistermaat-Heckman theorem~\eqref{DH}. Hence the derivatives $c_t'$ and $d_t'$ are locally constant in a regular interval, and so is~$f$. Therefore in a regular interval $(\alpha, \beta) \subset \mu(M)$, either $c_t \equiv d_t$, or $c_{t} = d_{t}$ possibly at only one point~$r$ in $(\alpha, \beta)$. Therefore the function $\min\{c_t, d_t\}$ is differentiable on $(\alpha, \beta)$ except possibly at one point~$r$ where $c_{r} = d_{r}$, but $c_t \not\equiv d_t$.

	On the other hand, $c_t$ and $d_t$ may not be differentiable at critical values of $\mu$. Therefore there are only finitely many points of $\mu(M)$ where $f(t) = \frac{d}{dt}[\min(c_t, d_t)]$ is not defined.
\end{proof}

\begin{lemma}\label{lemma2}
	If $f(t)$ is not defined at a regular value~$r$ of~$\mu$, then for a sufficiently small $\epsilon$,
\[f(r - \epsilon) > f(r + \epsilon).\]
\end{lemma}

\begin{proof}
	Let $r$ be a regular value on which $f$ is not defined. For a small~$\epsilon$, choose $t_1 \in (r-\epsilon,r)$ and $t_2 \in (r, r+\epsilon)$. If $\min(c_t,d_t) = c_t$ on $(r-\epsilon,r)$, then $\min(c_t,d_t)$ should be $d_t$ on $(r,r+\epsilon)$. This happens when the $d_t '$ is smaller than $c_t'$ on $(r-\epsilon,r+\epsilon)$. This means that $f(t_1) = c_t' > f(t_2) = d_t '$. Therefore $f$ decreases when it passes through the level $\mu^{-1}(r)$. Similarly, we can prove that $f$ decreases when it passes through the level $\mu^{-1}(r)$ in the case when $\min(c_t,d_t) = d_t$.
\end{proof}

	Let $s$ be a critical value of~$\mu$, and let $M^{S^1} \cap \mu^{-1}(s) = \coprod_{i=1}^k X_i$ where $X_i$ is a fixed surface with genus~$g_i$ for $i=1, \dots, k$. By~\eqref{DH} and \eqref{class}, we can easily see that
\begin{equation}\label{formula}
	e_{s+\epsilon} = e_{s-\epsilon} + (\sum_{i=1}^k a_i)u + (\sum_{i=1}^k b_i)v.
\end{equation}
where $e_{s\pm \epsilon}$ is the Euler class of the principal $S^1$-bundle over $M_{s \pm \epsilon}$ for a sufficiently small $\epsilon$. Therefore by~\eqref{funct},
\begin{equation}\label{formula2}
	c_{t+\epsilon}' = c_{t-\epsilon}' - (\sum_{i=1}^k a_i), \mbox{ and } d_{t+\epsilon}' = d_{t-\epsilon}' - (\sum_{i=1}^k b_i).
\end{equation}

	By equation~\eqref{ad}, we have
\begin{equation}\label{product}
	(a_i - 1)(b_i - 1) = g_i
\end{equation}
for all $i$. So if $X_i$ has genus $g_i > 0$, then
\begin{equation}\label{genus}
	a_i \geq 2, \hskip5mm b_i \geq 2
\end{equation}
since one of $a_i$ and $b_i$ is positive by~\eqref{pos}.
If $g_i = 0$,
\begin{equation}\label{genus0}
	a_i =1 \hskip5mm   \textrm{or}  \hskip5mm  b_i = 1.
\end{equation}

	Now, assume that $f$ is not defined at~$s$. Then we have three possibilities; $c_s < d_s$, $c_s = d_s$, or $c_s > d_s$. Choose a sufficiently small $\epsilon > 0$ so that $f$ is defined on $s-\epsilon \leq t < s$ and $s < t \leq s+\epsilon$.

\begin{lemma}\label{lemma3}
	Assume $c_s < d_s$ so that $f(t) = c_t'$ on $(s-\epsilon, s+\epsilon) - \{s\}$. Then $$ f(s - \epsilon) \geq f(s + \epsilon).$$ The equality holds if and only if $g_i = 0$ and $a_i=0, b_i = 1$ for all $i$. If $g_i>0$ for some $i$, then $f$ decreases by at least $2$ when it passes through the critical value $s$.
\end{lemma}

\begin{proof}
	If $g_i > 0$, then $a_i, b_i \geq 2$ by~\eqref{genus}. If $g_i = 0$, then $a_i = 1$ or $b_i = 1$ by \eqref{genus0}. If $b_i = 1$ and $a_i < 0$, by~\eqref{pos},
\[0 < a_id_s + b_ic_s = a_id_s + c_s < a_id_s + d_s < d_s(a_i + 1) \leq 0,\]
which is a contradiction. So, if $b_i = 1$, then $a_i$ must be non-negative. Therefore if $c_s < d_s$, then $a_i$ is non-negative for all $i$. Note that by~\eqref{formula2}, $f$ decreases by $\sum_{i=1}^k a_i$ when it passes through the level $\mu^{-1}(s)$. Hence we are done.
\end{proof}

	Similarly, we have
\begin{lemma}\label{lemma4}
	Assume $c_s > d_s$ so that $f(t) = d_t'$ on $(s-\epsilon, s+\epsilon) - \{s\}$. Then
\[f(s - \epsilon) \geq f(s + \epsilon).\]
The equality holds if and only if $g_i = 0$ and $a_i=1, b_i = 0$ for all $i$. If $g_i>0$ for some $i$, then $f$ decreases by at least $2$ when it passes through the critical value~$s$.
\end{lemma}

\begin{remark}\label{negative}
	Different from the case when $g \geq 1$, if $a_i = 1$, then $b_i$ may be negative in Lemma \ref{lemma3}. Similarly, if $b_i = 1$, then $a_i$ may be negative in Lemma \ref{lemma4}.
\end{remark}

\begin{lemma}\label{lemma5}
	Assume $c_s = d_s$. Then
\[f(s - \epsilon) \geq f(s + \epsilon).\]
In particular, $a_i, b_i \geq 0$ and one of $a_i$ and $b_i$ is positive for all $i$. If $g_i>0$ for some $i$, then $f$ decreases by at least $2$ when it passes through the critical value~$s$.
\end{lemma}

\begin{proof}
	If $g_i > 0$, then $a_i, b_i \geq 2$ by~\eqref{genus}. If $g_i = 0$, then $a_i = 1$ or $b_i = 1$ by~\eqref{genus0}. On the other hand, $0< a_id_s + b_ic_s = c_s(a_i + b_i)$ by~\eqref{pos}. Therefore if $a_i = 1$, then $b_i \geq 0$. Similarly, $a_i \geq 0$ if $b_i = 1$. Hence
\begin{equation}\label{eq:pos}
\sum_{i=1}^k a_i \geq 0, \quad \sum_{i=1}^k b_i \geq 0.
\end{equation}
Hence $c_t'$ and $d_t'$ do not increase when they pass through the critical value~$s$ by~\eqref{formula2}.

	Assume that $c_t < d_t$ on $(s-\epsilon, s)$. Then $f(t) = c_t' > d_t'$ on $(s-\epsilon, s)$. Hence $f(t)$ does not increase when it passes through $s$. Similarly, $f$ does not increase in the case when $c_t > d_t$ on $(s-\epsilon, s)$.

	If $c_t = d_t$ on $(s-\epsilon, s)$, since $c_t'$ and $d_t'$ do not increase when they passes through $s$ by \eqref{formula2}~and \eqref{eq:pos}, $f$ does not increase.
\end{proof}

	Now, back to the proof of Proposition~\ref{thm1}. By the previous lemmas, $f$ is a non-increasing step function on the complement of a finite set in $\mu(M)$. Assume the action is non-Hamiltonian so that $\mu$ is surjective. Then $f$ is defined on $S^1$ except at finitely many points. Since $f$ is a non-increasing function, $f$ should be constant. By Lemma~\ref{lemma3}, Lemma~\ref{lemma4}, and Lemma \ref{lemma5}, $g_i = 0$ and $a_i+b_i >0$ for all $i$. This contradicts \eqref{zerosum2}. Hence the $S^1$-action should be Hamiltonian. This proves Proposition~\ref{thm1}.
\end{proof}

    Now, let $(M,\omega)$ be a $6$-dimensional closed Hamiltonian semifree $S^1$-manifold with moment map $\mu : M \rightarrow [t_{\min},t_{\max}] \subset \R$. Assume that $M^{S^1}$ consists of surfaces. Let $\Sigma_{\min}$ and $\Sigma_{\max}$ be two fixed components such that $\mu(\Sigma_{\min}) = t_{\min}$ and $\mu(\Sigma_{\max}) = t_{\max}$. Note that $\Sigma_{\min}$ and $\Sigma_{\max}$ are diffeomorphic to the Riemann surface $\Sigma_g$ of genus $g$. We give a basis of $H^2(M_{\red}, \Z)$ as follows. By Morse theory, the level set $\mu^{-1}(r)$ near the minimum of $\mu$ is an $S^3$-bundle over $\Sigma_{\min}$ and $S^1$ acts on $\mu^{-1}(r)$ fiberwise. Therefore the $S^3$-bundle $\mu^{-1}(r) \rightarrow \Sigma_{\min}$ induces an $S^2$-fibration $\pi : M_{\red} \rightarrow \Sigma_{\min}$. By Theorem~\ref{LL}, we may assume the reduced symplectic form $\omega_r$ to be compatible with the ruling given by $\pi$. Let $u$ be dual to the class represented by a symplectic fiber $F$. When $M_{\red}$ is trivial, let $v$ be dual to the class represented by the base $B = [\Sigma_g \times pt]$ so that $\int_{M_{\red}} uv = 1$. For the non-trivial case, let $v$ be dual to the class represented by a section with self-intersection $-1$ with $\int_{M_{\red}} uv = 1$. Then $u$ and $v$ form a basis of $H^2(M_{\red}, \Z)$ and we can express $[\omega_t] = c_tu + d_tv$ for all $t \in (t_{\min}, t_{\max})$.

\begin{remark}\cite{L1}\label{twist}
	Consider the $S^3$-fibration $\mu^{-1}(r) \rightarrow \Sigma_{\max}$ near the maximum of $\mu$. It also induces an $S^2$-fibration $\pi' : M_{\red} \rightarrow \Sigma_{\max}$. Similarly, we can choose a basis $u', v'$ of $H^2(M_{\red}, \Z)$ with respect to $\pi'$ as above. If $M_{\red} \ncong S^2 \times S^2$, then we can show that $u=u'$ and $v=v'$ in $H^2(M_{\red}, \Z)$. In the case when $M_{\red} \cong S^2 \times S^2$, there are only two possibilities; $u=u'$, $v=v'$, or $u=v'$ and $v=u'$. If $u=u'$ and $v=v'$, we say that $M$ is \textit{not twisted}. If $u=v'$ and $v=u'$, we say that $M$ is \textit{twisted}. If $M$ is twisted, then the isotopy between the reduced spaces near the minimum and the maximum along the Morse flow maps a fiber of $M_{\red} \rightarrow \Sigma_{\min}$ representing the dual of $u$ to a section of $M_{\red} \rightarrow \Sigma_{\max}$ representing the dual of $v'$, and a section of $M_{\red} \rightarrow \Sigma_{\min}$ representing the dual of $v$ to a fiber of $M_{\red} \rightarrow \Sigma_{\max}$ representing the dual of $u'$. See \cite{L1} for more details.
\end{remark}

	To prove the second statement of Theorem~\ref{main} in the trivial fibration cases, we need the following lemma due to Li.

\begin{lemma}\label{Li}\cite{L1}
	Let $(M,\omega)$ be a $6$-dimensional closed Hamiltonian semifree $S^1$-manifold with moment map $\mu : M \rightarrow \R$.

	\begin{itemize}
	\item[i)] If the minimum is a surface $\Sigma_g$ with the first Chern number of its normal bundle equal to $b_{\min}$, then the reduced space $M_{\red}$ near the minimum is diffeomorphic to $\Sigma_g \times S^2$ if and only if $b_{\min} = 2k$ is even, and it is diffeomorphic to $E_{\Sigma_g}$ if and only if $b_{\min} = 2k+1$ is odd. In either case, the Euler class of the principal $S^1$-bundle over $M_{\red}$ near the minimum is $ku - v$.

	\item[ii)] If the maximum is a surface $\Sigma_g$ with the first Chern number of its normal bundle equal to $b_{\max}$, then the reduced space $M_{\red}$ near the maximum is diffeomorphic to $\Sigma_g \times S^2$ if and only if $b_{\max} = 2k'$ is even, and it is diffeomorphic to $E_{\Sigma_g}$ if and only if $b_{\max} = 2k'+1$ is odd. In either case, the Euler class of the principal $S^1$-bundle over $M_{\red}$ near the maximum is $-k'u + v$ if $M_{\red} \ncong S^2 \times S^2$ near the maximum. In the case when $M_{\red} \cong S^2 \times S^2$ near the maximum, the Euler class of the principal $S^1$-bundle over $M_{\red}$ near the maximum is $-k'u + v$ or $-k'v + u$.
	\end{itemize}

\end{lemma}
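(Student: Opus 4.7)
The plan is to pass to the local model given by the equivariant symplectic slice theorem and then compute on a projective bundle. Since $\Sigma_g = \Sigma_{min}$ lies at the minimum of $\mu$ and the action is semifree, the normal bundle $N \to \Sigma_g$ is a rank-$2$ complex vector bundle on which $S^1$ acts with weight $+1$ on every fiber; by hypothesis $c_1(N) = b_{min}$. The equivariant Darboux/slice theorem identifies a neighborhood of $\Sigma_g$ in $M$ with a neighborhood of the zero section of $N$, carrying moment map $\mu_{min} + \tfrac{1}{2}|v|^{2}$. Consequently, for small $\epsilon > 0$ the level set $\mu^{-1}(t_{min}+\epsilon)$ is the unit sphere bundle $S(N)$, and the reduced space near the minimum is the complex projectivization
\[
M_{red} = S(N)/S^1 = P(N).
\]

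To obtain the diffeomorphism-type dichotomy, I will invoke the classification of oriented $S^2$-bundles over $\Sigma_g$: this set is $H^2(\Sigma_g, \pi_1(SO(3))) = \Z/2$ and is detected by $w_2$, and for any rank-$2$ complex bundle $E$ one has $w_2(P(E)) \equiv c_1(E) \pmod 2$. Hence $P(N) \cong \Sigma_g \times S^2$ exactly when $b_{min}$ is even and $P(N) \cong E_{\Sigma_g}$ exactly when $b_{min}$ is odd; writing $b_{min} = 2k$ or $b_{min} = 2k+1$ then fixes the integer $k$. For the Euler class, the principal $S^1$-bundle $S(N) \to P(N)$ is precisely the unit sphere bundle of the tautological subbundle $\mathcal{O}_{P(N)}(-1) \subset \pi^{*}N$, so its Euler class equals $c_1(\mathcal{O}_{P(N)}(-1)) = -H$ where $H := c_1(\mathcal{O}_{P(N)}(1))$. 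Since $H^4(\Sigma_g) = 0$, the Grothendieck relation collapses to $H^2 + c_1(N)\cdot H = 0$, so with $u = \pi^{*}[pt_{\Sigma_g}]$ we have $uH = 1$ and $H^2 = -b_{min}$ in the intersection pairing. Solving $v = H + \alpha u$ for the correct self-intersection ($v^2 = 0$ in the trivial case, $v^2 = -1$ in the non-trivial case) yields $\alpha = k$ in both cases, so $H = v - ku$ and
\[
e = -H = ku - v,
\]
which proves i).

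For ii), I plan to run the identical argument near $\Sigma_{max}$: semifreeness now forces normal weights $-1$, and passing from $\mu$ to $-\mu$ turns the maximum into a minimum without changing the local topology, so the same dichotomy produces $b_{max} = 2k'$ or $2k'+1$. The Duistermaat-Heckman formula (\ref{DH}) is odd under $t \mapsto -t$, so the Euler class with respect to the original $\mu$ acquires an overall sign flip relative to the minimum computation, giving $-k'u + v$ in the basis adapted to the ruling at the maximum. When $M_{red} \not\cong S^2 \times S^2$, Proposition~\ref{attach} together with Remark~\ref{twist} forces this basis to coincide with the basis $\{u,v\}$ defined near the minimum, completing the formula; in the exceptional case $M_{red} \cong S^2 \times S^2$ the two identifications can differ by the involution swapping $u$ and $v$, which produces the alternative expression $-k'v + u$.

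The routine ingredients here are the equivariant slice theorem, the $w_2$-classification of $S^2$-bundles, and the Grothendieck relation. The delicate point I expect to need the most care with is the sign bookkeeping: the paper uses Auroux's symplectic sign convention, so the associated complex line bundle of the principal $S^1$-bundle must be taken to be $\mathcal{O}_{P(N)}(-1)$ (rather than $\mathcal{O}_{P(N)}(1)$) in order for the sign in~(\ref{DH}) to yield the correct formula in i); likewise, the sign flip at the maximum must be read off from the same equation. The potential $u\leftrightarrow v$ twist at the maximum when $M_{red}\cong S^2\times S^2$ is not a genuine obstacle, being settled directly by Proposition~\ref{attach}.
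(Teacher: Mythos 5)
The paper does not prove Lemma~\ref{Li}; it is imported verbatim from Li's paper \cite{L1}, so there is no in-text proof to compare against. Your proposal is therefore a self-contained reconstruction, and it is essentially correct.

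The computational core is sound. The local model near the minimum is the normal bundle $N\to\Sigma_g$ with the weight-$1$ fibre action, the reduced space is $P(N)$, and the circle bundle $\mu^{-1}(t_{\min}+\epsilon)\to P(N)$ is the unit circle bundle of $\mathcal O_{P(N)}(-1)$, hence has Euler class $-H$ with $H=c_1(\mathcal O_{P(N)}(1))$. The Grothendieck relation $H^2+\pi^*c_1(N)\,H+\pi^*c_2(N)=0$ collapses over a surface to $\int H^2=-b_{\min}$, and then requiring $v=H+\alpha u$ to have $v^2=0$ (trivial case, $b_{\min}=2k$) or $v^2=-1$ (non-trivial case, $b_{\min}=2k+1$) gives $\alpha=k$ in both cases, so $H=v-ku$ and $e_{\min}=-H=ku-v$. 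The parity criterion $w_2(P(N))\equiv c_1(N)\bmod 2$ gives the diffeomorphism dichotomy. All of this is consistent with the sign normalization (\ref{DH}) of the paper, since $e_{\min}=ku-v$ has $v$-coefficient $-1$, which is exactly what forces the fibre area $d_t=\int_F\omega_t$ to increase away from the minimum. The symmetric argument at the maximum, where the normal weight is $-1$ so the bundle becomes the unit circle bundle of $\mathcal O_{P(N')}(+1)$, gives $e_{\max}=H'=v'-k'u'$, matching the statement.

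Two small caveats, neither fatal. First, for statement~ii) you implicitly need the identification of the basis $\{u',v'\}$ defined near the maximum with the basis $\{u,v\}$ defined near the minimum via the Morse-flow isotopy; you invoke Remark~\ref{twist} and Proposition~\ref{attach} for this, but Proposition~\ref{attach} concerns symplectomorphisms preserving the ruling, which is not quite the same thing as the Morse-flow diffeomorphism. In fact the paper's Remark~\ref{twist} also simply cites \cite{L1} for this identification, so you are relying on the same external input as the paper does, not replacing it. Second, you should say explicitly that $v$ defined by the self-intersection normalization is the same class as the paper's $v$ (dual to a section); this is immediate since there is a unique class $v$ with $\int uv=1$ and $v^2=0$ (resp.\ $-1$), namely $v=H+ku$, so no real gap, but the identification deserves a sentence. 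Modulo those two remarks, the proof goes through and is a cleaner, more explicit derivation than a bare citation.
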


\begin{proposition} \label{Euler}
	Let $M$ be a $6$-dimensional closed Hamiltonian semifree $S^1$-manifold whose fixed point set consists of surfaces, and let $\mu : M \rightarrow [t_{\min},t_{\max}] \subset \R$ be the moment map. Suppose $M_{\red}$ is diffeomorphic to $\Sigma_g \times S^2$. Then the number of the fixed surfaces with positive genera is at most four. If the maximal fixed surface is a sphere, then there is at most one fixed surface with positive genus. Moreover if there are four fixed surfaces with positive genera, these genera are all equal to $g$.
\end{proposition}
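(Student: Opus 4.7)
My approach is to combine the adjunction formula for each fixed surface with a telescoping of the Euler-class jumps (\ref{class}) from the minimum to the maximum of the moment map, mirroring the strategy of Proposition~\ref{thm1} but with the boundary data at $t_{min}, t_{max}$ now supplied by Lemma~\ref{Li}. Choose the basis $\{u,v\}$ of $H^2(M_{red}, \Z)$ coming from the $S^2$-fibration $M_{red} \to \Sigma_{min}$, and write $D(X_i) = a_i u + b_i v$ for each interior fixed surface $X_i$ of genus $g_i$. Lemma~\ref{Li}(i) gives the Euler class just above $t_{min}$ as $ku - v$, and Lemma~\ref{Li}(ii) gives the Euler class just below $t_{max}$ as $-k'u + v$ in the non-twisted case, and as $u - k'v$ in the twisted case (which can only occur when $M_{red} \cong S^2 \times S^2$). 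Summing (\ref{class}) across all interior critical values then yields, in the non-twisted case,
\[
\sum_{X_i \text{ interior}} D(X_i) \;=\; -(k+k')u + 2v, \qquad \text{hence} \qquad \sum_i b_i = 2.
\]
Each interior $X_i$ sits symplectically in $(M_{red}, \omega_s)$ at its critical level $s$ with $c_s, d_s > 0$, so (\ref{pos}) yields $a_i d_s + b_i c_s > 0$ and (\ref{ad}) reads $(a_i - (1-g))(b_i - 1) = g_i - g$.

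For $g \geq 1$ the ruled surface $M_{red}$ is not $S^2 \times S^2$ and no twisting occurs. I first show as in Case~1 of Proposition~\ref{thm1} that every interior $X_i$ has $b_i \geq 0$, since assuming $b_i \leq -1$ and combining (\ref{ad}) with positivity forces $a_i \leq 0$ and $a_i d_s + b_i c_s \leq 0$, a contradiction. I then strengthen this for interior $X_i$ with $g_i \geq 1$: when $g_i = g$ the adjunction gives $a_i = 1 - g$ or $b_i = 1$, and in the first subcase positivity combined with $a_i \leq 0$ forces $b_i \geq 1$; when $g_i \neq g$ one has $b_i \neq 1$, and ruling out $b_i = 0$ by positivity (since it would force $a_i = 1 - g_i \leq 0$) yields $b_i \geq 2$. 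Since $\sum_i b_i = 2$, at most two interior surfaces can have positive genus, and together with the two extremal surfaces of genus $g$ the total is at most four. If equality holds, the two interior contributions must each satisfy $b_i = 1$, forcing $g_i = g$ by the strengthened dichotomy, so all four genera coincide with $g$.

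For $g = 0$ the extremal surfaces are spheres and do not contribute, and I handle the twisted and non-twisted subcases uniformly via the step function $f(t) = \tfrac{d}{dt}\min(c_t, d_t)$ from the proof of Proposition~\ref{thm1}. Just above $t_{min}$ the fiber area $d_t$ shrinks to zero with $d_t' = 1$, so $f \equiv 1$ there; just below $t_{max}$ the relevant fiber area (which is $d_t$ non-twisted, or $c_t$ twisted) shrinks to zero with derivative $-1$, so $f \equiv -1$ there; hence $f$ has total decrease exactly $2$ across $[t_{min}, t_{max}]$. By the adjunction formula each interior $X_i$ with $g_i \geq 1$ satisfies $a_i, b_i \geq 2$, and Lemmas~\ref{lemma3}, \ref{lemma4}, \ref{lemma5} then imply that crossing such an $X_i$ decreases $f$ by at least $2$; hence at most one interior positive-genus surface can occur. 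The main technical subtlety is identifying, via Lemma~\ref{Li}, which of $c_t, d_t$ represents the shrinking fiber area at each extremum in the twisted case, which is what anchors the boundary values $f = \pm 1$ and makes the single unified $f$-argument work.
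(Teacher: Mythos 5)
Your proposal is correct and follows essentially the same route as the paper: the $g \geq 1$ case via Lemma~\ref{Li} and the telescoping of (\ref{class}) to obtain $\sum b_i = 2$ combined with $b_i \geq 0$ and adjunction, and the $g = 0$ case via the non-increasing step function $f(t) = \tfrac{d}{dt}\min(c_t,d_t)$ anchored at $\pm 1$ at the extrema (with the same careful bookkeeping of the twisted sub-case via Lemma~\ref{Li}) and the $\geq 2$ drop across positive-genus critical levels from Lemmas~\ref{lemma3}--\ref{lemma5}. The only cosmetic difference is that you argue $b_i \geq 1$ (resp.\ $\geq 2$) directly for positive-genus interior surfaces when $g\geq 1$, whereas the paper enumerates $b_i \in \{0,1,2\}$ and discards $b_i = 0$; these are the same inequality.
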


\begin{proof}
	Let $M^{S^1} = \coprod_i X_i$ be the set of fixed surfaces, and let $g_i$ be the genus of $X_i$ for each $i$. Let $D(X_i) = a_iu + b_iv$ for each $i$, where $\{u,v\}$ is the basis of $H^2(M_{\red}, \Z)$ as mentioned before Remark~\ref{twist}. Let $\Sigma_{\textrm{mid}}$ denote the subset of $M^{S^1}$ consisting of non-extremal fixed surfaces, $\Sigma_{\min}$ the minimal fixed surface, and $\Sigma_{\max}$ the maximal fixed surface.

	We first prove the case when $g \geq 1$. Then $b_i \geq 0$ for all $i$ by the proof of Proposition~\ref{thm1}. For a principal $S^1$-bundle $\mu^{-1}(r) \rightarrow M_{\red}$ for a regular value $r \in S^1$, let $e_{\min}$ be the Euler class of principal $S^1$-bundle near the minimum of $\mu$ and $e_{\max}$ be the one near the maximum of $\mu$. By~\eqref{class},
\[e_{\max} = e_{\min} + \sum_{X_i \subset {\Sigma}_{\textrm{mid}}} D(X_i).\]
By Lemma~\ref{Li}, $e_{\min} = ku-v$ and $e_{\max} = -k'u + v$ for some $k, k' \in \Z$. Hence
\begin{equation}\label{two}
	\sum_{X_i \subset {\Sigma}_{\textrm{mid}}}  b_i = 2.
\end{equation}
Therefore, the possible values of $b_i$ are $0$,~$1$, or~$2$. If $b_i = 0$, then \eqref{ad} implies that $a_i = 1 - g_i$. Since $a_i$ and $b_i$ cannot be both non-positive by \eqref{pos}, $a_i$ should be positive so that $g_i = 0$. If $b_i = 1$, then $g_i = g$ by \eqref{ad}. If $b_i = 2$, then $b_j = 0$ for all $j \neq i$ by \eqref{two}. Therefore there are at most two fixed surfaces $X_i$ which have $b_i \neq 0$ and are neither maximal nor minimal with respect to $\mu$. So, the number of fixed surfaces which are interior with respect to $\mu$ and have positive genera is at most two. Counting with the minimal and the maximal fixed surfaces, there are at most four fixed surfaces with positive genera. In particular, if there are four fixed surfaces having non-zero genera, then these genera are all equal to $g$.

	We now prove the case when $g = 0$. In this case, $M_{\red} \cong S^2 \times S^2$ and $b_i$ may be negative as we mentioned in Remark~\ref{negative}. So we need another approach. Remember that $[\omega_t] = c_tu + d_tv$. Let $F \cong pt \times S^2$ be a fiber of $M_{\red} \rightarrow \Sigma_{\min}$ which represents the dual of $u$, and let $B \cong S^2 \times pt$ be a section of $M_{\red} \rightarrow \Sigma_{\min}$ which represents the dual of $v$ as mentioned before Remark \ref{twist}. Since the volume of the fiber tends to 0 and the volume of the section $B$ tends to the symplectic volume of the minimal fixed surface as $t$ approaches $t_{\min}$,
\[0 = \lim_{t \rightarrow t_{\min}} \int_F \omega_t = \lim_{t \rightarrow t_{\min}} \int_{M_{\red}} [\omega_t] \cdot u = \lim_{t \rightarrow t_{\min}} d_t\]
and
\[0 < \lim_{t \rightarrow t_{\min}} \int_B \omega_t = \lim_{t \rightarrow t_{\min}} \int_{M_{\red}} [\omega_t] \cdot v = \lim_{t \rightarrow t_{\min}} c_t.\]
So, $c_t > d_t$ near $\Sigma_{\min}$. Then, near $\Sigma_{\min}$, the value of the function $f$ defined in~\eqref{function} is equal to the negative of the coefficient of $v$ of $e_{\min}$ by~\eqref{funct}. By Lemma~\ref{Li}, $e_{\min} = ku - v$ so that $f(t) = 1$ near $\Sigma_{\min}$. Near $\Sigma_{\max}$, let $F' \cong pt \times S^2$ be a fiber of $M_{\red} \rightarrow \Sigma_{\max}$ which represents the dual of $u'$ and let $B' \cong S^2 \times pt$ be a section of $M_{\red} \rightarrow \Sigma_{\max}$ which represents the dual of $v'$ as mentioned in Remark~\ref{twist}.

	If $M$ is twisted, then $u = v'$ and $v = u'$ so that $B'$ represents the dual of $u$ and $F'$ represents the dual of $v$. Therefore
\[0 = \lim_{t \rightarrow t_{\max}} \int_{F'} \omega_t = \lim_{t \rightarrow t_{\max}} \int_{M_{\red}} [\omega_t] \cdot v = \lim_{t \rightarrow t_{\max}} c_t\]
and
\[0 < \lim_{t \rightarrow t_{\max}} \int_{B'} \omega_t = \lim_{t \rightarrow t_{\max}} \int_{M_{\red}} [\omega_t] \cdot u = \lim_{t \rightarrow t_{\max}} d_t.\] Hence $f(t) = \frac{d}{dt}c_t$ which is equal to the negative coefficient of $u$ of $e_{\max}$ near $M_{\max}$. By Lemma~\ref{Li}, $e_{\max} = u-k'v$ for some integer $k'$ so that $f(t) = -1$ near $M_{\max}$.

	If $M$ is not twisted, then $u = u'$ and $v = v'$ so that $B'$ represents the dual of $v$ and $F'$ represents the dual of $u$. Therefore
\[0 = \lim_{t \rightarrow t_{\max}} \int_{F'} \omega_t = \lim_{t \rightarrow t_{\max}} \int_{M_{\red}} [\omega_t] \cdot u = \lim_{t \rightarrow t_{\max}} d_t\]
and
\[0 < \lim_{t \rightarrow t_{\max}} \int_{B'} \omega_t = \lim_{t \rightarrow t_{\max}} \int_{M_{\red}} [\omega_t] \cdot v = \lim_{t \rightarrow t_{\max}} c_t.\] Hence $f(t) = \frac{d}{dt}d_t$ which is equal to the negative of the coefficient of $v$ of $e_{\max}$ near $\Sigma_{\max}$. By Lemma~\ref{Li}, $e_{\max} = -k'u + v$ for some integer $k'$ so that $f(t) = -1$ near $\Sigma_{\max}$. So if $M_{\red} \cong S^2 \times S^2$, then $f(t) = 1$ near $\Sigma_{\min}$ and $f(t) = -1$ near $\Sigma_{\max}$.

	Note that $f$ is a non-increasing function, and if the fixed surface $X_i$ has genus $g_i >0$, then $f$ decreases by at least 2 when it passes through the critical level containing $X_i$ by Lemma~\ref{lemma3},~\ref{lemma4}, and~\ref{lemma5}. Hence there exists at most one fixed surface $X_i$ with $g_i \neq 0$. Since $\Sigma_{\min}$ and $\Sigma_{\max}$ are spheres, there is at most one interior fixed surface $X_i$ with $g_i \neq 0$.

	To sum up, if $M_{\red} \cong \Sigma_{g} \times S^2$ with $g \geq 1$, then there are at most four fixed surfaces having non-zero genera in $M$. In particular, if there are four fixed surfaces having non-zero genera, then these genera are all equal to $g$. If $M_{\red} \cong S^2 \times S^2$, then there is at most one fixed surface having non-zero genus.
\end{proof}

	In fact, the number of fixed surfaces with positive genera cannot be two. This will be proved in the next section. See Proposition~\ref{twopositivegenus}.

\section{Proof of Theorem \ref{main} : non-trivial fibration cases}\label{proof for nontrivial}

	Let $E_{\Sigma_g}$ denote a non-trivial $S^2$-bundle over a Riemann surface $\Sigma_g$ of genus~$g$. In this section, we assume that $M_{\red} \cong E_{\Sigma_g}$. Let $u$ and $v$ be the basis of $H^2(M_{\red},\Z)$ chosen as in Section~\ref{symplectic ruled surface} so that $\int_{M_{\red}} u^2 = 0$, $\int_{M_{\red}} uv = 1$ and $\int_{M_{\red}} v^2 = -1$. Let $M^{S^1} = \coprod_i X_i$ be the fixed point set consisting of surfaces, and let $D(X_i) = a_i u + b_i v$ for each $i$. By Proposition~\ref{attach}, we can parametrize the classes of reduced symplectic forms by $t \in S^1$ so that $[\omega_t] = c_t u + d_t v$, where $c_t$ and $d_t$ are continuous for all $t \in \mu(M)$. By Theorem~\ref{LL}, we may assume that the fiber $F \cong S^2$ is symplectic. Hence $\int_F \omega_t = d_t > 0$.

	Before we prove Theorem~\ref{main}, we make some remark on symplectic $4$-manifolds. A symplectic $4$-manifold $(M,\omega)$ is called \textit{minimal} if it has no symplectically embedded sphere with self-intersection~$-1$. It is well-known that any irrational ruled surface is minimal.

	The following theorem due to McDuff tells us which cohomology classes in $H^2(M_{\red},\Z)$ can be represented by a symplectic form when $M_{\red}$ is a non-trivial rational ruled surface, i.e., a Hirzebruch surface.

\begin{theorem}(\cite{MD2} ,\cite{McS} p.204)\label{McD}
	Let $M_{\red}$ be a non-trivial rational ruled surface. For the basis $\{u,v\}$ chosen above, there is a symplectic form $\omega$ on $M_{\red}$ which is compatible with the given ruling with $[\omega] = cu+dv$ if and only if $c>d>0$.
\end{theorem}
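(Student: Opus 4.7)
My plan is to split the biconditional into a direct necessity argument and an explicit sufficiency construction via symplectic blow-up.

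For the necessity direction, suppose $[\omega]=cu+dv$ with $\omega$ compatible with the ruling. Since each fiber $F$ is then a symplectic sphere representing the Poincar\'e dual of $u$,
\[
    0 < \int_F \omega = (cu+dv)\cdot u = d.
\]
For the inequality $c>d$, I would use the fact that $M_{red}$, being a non-trivial rational ruled surface, is diffeomorphic to the Hirzebruch surface $\mathbb{F}_1 \cong Bl_{pt}\CP^2$, and that the class $v$ (with $v^2 = -1$) is always represented by an embedded symplectic $(-1)$-sphere $B_-$: picking an $\omega$-tame almost complex structure and invoking Gromov's theorem on rational surfaces produces a $J$-holomorphic, hence symplectic, representative of $v$. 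Then positivity gives
\[
    0 < \int_{B_-} \omega = (cu+dv)\cdot v = c-d.
\]

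For the sufficiency direction, given $c>d>0$, set $\delta := c-d \in (0,c)$ and construct $\omega$ by symplectic blow-up: take the Fubini-Study form on $\CP^2$ scaled so that a line has symplectic area $c$, and symplectically blow up a single point with size $\delta$. The admissibility $0<\delta<c$ is exactly what the blow-up construction requires. The resulting form $\omega_{c,\delta}$ on $\mathbb{F}_1 \cong M_{red}$ has cohomology class $c\cdot PD(H)-\delta\cdot PD(E)$, where $H$ is the pulled-back hyperplane and $E$ the exceptional divisor. The identifications $PD(H)=u+v$ and $PD(E)=v$ follow from the intersection numbers $H^2 = 1$, $H\cdot F = 1$, $H\cdot B_- = 0$ together with $E = B_-$, and substituting yields
\[
    [\omega_{c,\delta}] = c(u+v)-\delta v = cu+(c-\delta)v = cu+dv.
\]
Compatibility with the ruling holds since the blow-up can be performed so that the strict transforms of lines through the blown-up point remain the symplectic fibers.

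The main obstacle in this plan is the inequality $c>d$ in necessity: positivity of the volume $\int \omega^2$ together with $d>0$ only yields the weaker $c>d/2$, so one genuinely needs to know that the $(-1)$-class $v$ always admits a symplectic representative. This invokes non-trivial pseudo-holomorphic curve theory on rational surfaces, as in McDuff's work cited above. The sufficiency direction, by contrast, is routine once the standard symplectic blow-up construction is in hand, and the only real bookkeeping is the identification of $PD(H)$ and $PD(E)$ in the chosen basis $\{u,v\}$.
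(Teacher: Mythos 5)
The paper does not supply a proof of Theorem \ref{McD}; it is stated with citations to \cite{MD3} and \cite{McS}, so there is no in-paper argument to compare against. Your argument is correct and follows the same strategy as the cited McDuff references: for necessity you correctly identify the hard input, namely that the exceptional class $v$ is represented by an embedded symplectic $(-1)$-sphere for \emph{every} compatible form (this uses pseudoholomorphic curve theory, essentially that the Gromov invariant of $E$ is nonzero), after which $d=\int_F\omega>0$ and $c-d=\int_{B_-}\omega>0$ follow from positivity of symplectic area; and for sufficiency, the blow-up of $(\CP^2,c\,\omega_{FS})$ by a ball of capacity $\delta=c-d\in(0,c)$ lands in the class $c(u+v)-\delta v=cu+dv$, with the change of basis $PD(H)=u+v$, $PD(E)=v$ computed correctly. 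You correctly observe that the naive volume bound $\int\omega^2>0$ only gives $2c>d$, so the $(-1)$-curve argument is genuinely needed.

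One place to tighten: in the sufficiency direction, compatibility with the ruling does not follow for free from the abstract (ball-removal) symplectic blow-up construction, since the proper transforms of lines through the blown-up point pass through the surgery region. The cleanest route is either to perform a K\"ahler (equivalently, toric) blow-up so that the fibers are holomorphic and hence symplectic, or simply to observe that $\mathbb{F}_1$ is toric with a trapezoidal moment polytope whose edge lengths can be chosen to realize any class $cu+dv$ with $c>d>0$, and the resulting toric K\"ahler form is automatically compatible with the ruling. Either patch is standard; the rest of the proposal is sound.
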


Now, we prove the first part of Theorem~\ref{main} for the non-trivial cases.

\begin{proposition}\label{lastprop}
	Let $(M,\omega)$ be a 6-dimensional closed symplectic semifree $S^1$-manifold with non-empty fixed point set $M^{S^1}$ consisting of surfaces. Let $\mu : M \rightarrow S^1$ be a generalized moment map. If $M_{\red}$ is diffeomorphic to $E_{\Sigma_g}$, then the action is Hamiltonian.
\end{proposition}

\begin{proof}
	Let $M^{S^1} = \coprod_i X_i$ be the fixed surfaces and $g_i \geq 0$ be the genus of $X_i$ for each $i$. Since the reduced space for $\mu$ is diffeomorphic to $E_{\Sigma_g}$ for each $t\in \mu(M)$, we can identify $\mu^{-1}(t) / S^1$ with $(E_{\Sigma_g}, \omega_t)$ where $\omega_t$ is the reduced symplectic form at level $t$. Remember that $D(X_i) = a_iu + b_iv$ denotes the class represented by the Poincar\'{e} dual of $X_i$ in $H^2({E_{\Sigma_g}},\Z)$.

	Since $\omega_t$ is the reduced symplectic form for every $t \in \mu(M) \subset S^1$, we have $\int_{E_{\Sigma_g}}[\omega_t]^2 > 0$. For each $i$, $X_i$ is a symplectic submanifold of the reduced space at critical value $s = \mu(X_i)$. So we have $\int_{E_{\Sigma_g}} [\omega_s] \cdot D(X_i) > 0$. Since $\int_F \omega_t = d_t > 0$, we have
\begin{equation}\label{pp}
	\int_{E_{\Sigma_g}}(c_t u + d_t v)^2 = (2c_t - d_t)d_t >  0
	\Longleftrightarrow 2c_t > d_t > 0,
\end{equation}

\begin{equation}\label{sub}
	\int_{E_{\Sigma_g}}(c_s u + d_s v)(a_i u + b_i v) = a_i d_s + b_i c_s - b_i d_s> 0.
\end{equation}

	Recall that $c_1(E_{\Sigma_g}) = (3-2g)u + 2v$ by \eqref{chern2}. The adjunction formula for $X_i$ implies that
\[(a_i u + b_i v)^2 - (a_i u+b_i v)((3-2g)u + 2v) + 2 = 2g_i.\]
	Therefore
\begin{equation}\label{ad3}
\left(a_i-\frac{b_i}{2}-(1-g)\right)(b_i-1) = g_i - g.
\end{equation}
	If we use the substitution
\[\alpha_i = a_i - \frac{b_i}{2}, \quad \beta_i = b_i,\]
\[\gamma_t =c_t-\frac{d_t}{2}, \quad \delta_t = d_t,\]
the above expressions \eqref{pp},~\eqref{sub}, and~\eqref{ad3} are written in the following familiar forms. (Compare with~\eqref{form}, \eqref{pos}, and~\eqref{ad}).
\begin{equation}\label{newform}
	\gamma_t > 0,\quad \delta_t > 0, \quad \forall t\in \mu(M),
\end{equation}
\begin{equation}\label{newpos}
	\alpha_i\delta_s + \beta_i\gamma_s > 0,
\end{equation}
\begin{equation}\label{newad}
	(\alpha_i - (1-g))(\beta_i - 1) = g_i - g.
\end{equation}

	In particular, \eqref{newform} and \eqref{newpos} imply that $\alpha_i$ and $\beta_i$ cannot be both non-positive simultaneously. Since $\alpha_i \in \frac{1}{2}\Z$ and $\beta_i \in \Z$, the situation is different from the previous section.
	\\
	\\
	\textbf{CASE 1: $g \geq 1$}
	\\

	If $g_i \neq g$, by \eqref{newad} $\beta_i \neq 1$ and $\alpha_i = 1 + \frac{g_i}{\beta_i-1} - \frac{\beta_i}{\beta_i-1}g$. Assume $\beta_i < 0$. Then $\frac{g_i}{\beta_i - 1} \leq 0$ and $\frac{-g\beta_i}{\beta_i - 1} < 0$. Since $\alpha_i$ should be positive by \eqref{newpos}, the only possible value for $\alpha_i$ is $\frac{1}{2}$. Therefore, the equation~\eqref{newad} is written as
\[\beta_i = \frac{2g_i-1}{2g-1}.\]
	Since $\beta_i$ is a negative integer and $g \geq 1$, we have $g_i=0$ and $g=1$. So the only possible case is $\alpha_i = \frac{1}{2}$ and $\beta_i = -1$. Therefore $a_i = 0$ and $b_i = -1$ so that $X_i$ is a sphere with $D(X_i) = -v$ and is symplectically embedded in $E_{\Sigma_1}$. This means that $X_i$ is an exceptional sphere in $E_{\Sigma_1}$. Since any irrational ruled surface is minimal, it is a contradiction. Therefore, $\beta_i \geq 0$.

If $g_i = g$, then $\alpha_i = 1-g$ or $\beta_i = 1$. Since $g \geq 1$, we have $\alpha_i \leq 0$ or $\beta_i = 1$. By~\eqref{newpos}, $\beta_i \geq 0$. Hence if $g \geq 1$, then $b_i = \beta_i \geq 0$ for all $i$.

	Now suppose that the action is not Hamiltonian. By~\eqref{zerosum}, $b_i = 0$ for all $i$. So $\alpha_i = a_i - \frac{b_i}{2} = a_i$ should always be positive for all $i$ by~\eqref{newpos}. It contradicts~\eqref{zerosum}.
	\\
	\\
	\textbf{CASE 2: $g = 0$}
	\\

	Note that Theorem~\ref{LL} allows us to assume $\omega$ is compatible with the ruling. Hence by Theorem~\ref{McD}, we have $c_t > d_t > 0$ for all $t \in \mu(M) \subset S^1$. By~\eqref{sub}, we have
\begin{equation}\label{last}
	a_i d_s + b_i (c_s - d_s) > 0,
\end{equation}
which implies that $a_i > 0$ or $b_i > 0$ for each $i$. By~\eqref{newad}, we have
\begin{equation}\label{adg}
	(\alpha_i-1)(\beta_i-1) = g_i.
\end{equation}

	In the case when $g_i = 0$, we have $\alpha_i = 1$ or $\beta_i = 1$. If $\alpha_i = 1$ and $b_i = \beta_i < 0$, then $a_i = \frac{b_i}{2} + \alpha_i < 1$. Since $a_i$ is an integer, $a_i \leq 0$ which contradicts~\eqref{last}. Therefore $b_i = \beta_i \geq 0$.

	We now consider the case when $g_i > 0$. Assume $\beta_i = b_i < 0$. Then $a_i$ should be positive by~\eqref{last}. But by~\eqref{adg}, $\alpha_i < 1$. Hence $a_i = \frac{b_i}{2} + \alpha_i < 1$. Since $a_i$ is an integer, it cannot be positive. It contradicts~\eqref{last}.

	Therefore $b_i \geq 0$ for all $i$. Now assume the action is non-Hamiltonian. Then $b_i = 0$ for all $i$ by~\eqref{zerosum}, and $a_i > 0$ for all $i$ by~\eqref{last}. It contradicts~\eqref{zerosum} again.
\end{proof}

\begin{proposition}\label{number}
	Let $M$ be a $6$-dimensional closed Hamiltonian semi-free $S^1$-manifold whose fixed point set consists of surfaces. Suppose $M_{\red}$ is diffeomorphic to $E_{\Sigma_g}$. Then the number of fixed surfaces with positive genera is at most four. If the maximal fixed surface is a sphere, then so is the minimal one and there is at most one fixed surface with positive genus. If there are four fixed surfaces with positive genera, then these genera are all equal to $g$.
\end{proposition}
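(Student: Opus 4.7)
The strategy is to mirror the proof of Proposition \ref{Euler}, systematically working in the transformed coordinates $\alpha_i = a_i - b_i/2$, $\beta_i = b_i$, $\gamma_t = c_t - d_t/2$, $\delta_t = d_t$ introduced in the proof of Proposition \ref{lastprop}. In these coordinates the positivity condition (\ref{newpos}) and the adjunction identity (\ref{newad}) take exactly the shape used in the trivial-bundle analysis, and we may directly invoke the conclusion of Proposition \ref{lastprop} that $\beta_i = b_i \geq 0$ for every fixed surface $X_i$, both when $g \geq 1$ and when $g = 0$.

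For $g \geq 1$, the extremal fixed surfaces $\Sigma_{min}$ and $\Sigma_{max}$ are diffeomorphic to $\Sigma_g$, so by Lemma \ref{Li} the Euler classes are of the form $e_{min} = ku - v$ and $e_{max} = -k'u + v$. Summing the jumps (\ref{class}) across all non-extremal critical levels yields $\sum b_i = 2$, where the sum runs over the non-extremal fixed surfaces. Together with $b_i \geq 0$, this leaves only two configurations: either exactly one non-extremal surface has $b_i = 2$ with the rest vanishing, or exactly two non-extremal surfaces have $b_j = 1$. The adjunction identity (\ref{newad}) combined with (\ref{newpos}) then pins down the genera: $b_i = 0$ forces $g_i = 0$, $b_i = 1$ forces $g_i = g$, and $b_i = 2$ leaves the genus unconstrained. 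Hence there are at most two non-extremal fixed surfaces with positive genus, and together with the two extremals (each of genus $g \geq 1$) this gives at most four in total. Equality requires the second configuration, in which both non-extremal candidates have $b_j = 1$ and therefore genus $g$, so all four genera equal $g$.

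For $g = 0$ the reduced space is a Hirzebruch surface, and Theorem \ref{McD} gives $c_t > d_t > 0$ throughout $\mu(M)$. Thus $\mathrm{min}(c_t, d_t) = d_t$, and the natural monotonicity function is $f(t) = \frac{d}{dt} d_t$. By Lemma \ref{Li}, $f = +1$ near $\Sigma_{min}$ and $f = -1$ near $\Sigma_{max}$, while (\ref{formula2}) together with $b_i \geq 0$ shows that $f$ is non-increasing at every critical level; the total drop is therefore exactly $2$. A short check using (\ref{newad}) and (\ref{newpos}) shows that any fixed surface $X_i$ with $g_i > 0$ must satisfy $b_i \geq 2$: indeed $\beta_i = 0$ would force $\alpha_i = 1 - g_i$, a non-positive integer, contradicting $\alpha_i \delta_s > 0$, and $\beta_i = 1$ would force $g_i = g = 0$. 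Hence at most one non-extremal fixed surface has positive genus, and since the extremals are spheres there is at most one fixed surface of positive genus in total.

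The subtle point, and the reason this case needs a separate argument from Proposition \ref{Euler}, is that $\alpha_i$ is a priori a half-integer rather than an integer, which is why outputs such as $\alpha_i = 1 - g_i$ of the adjunction identity must be ruled out using the integrality of $a_i = \alpha_i + \beta_i/2$ combined with (\ref{newpos}). Once $\beta_i = b_i$ has been confined to the small integer set $\{0, 1, 2\}$, the remaining counting is purely combinatorial and proceeds exactly as in the trivial case.
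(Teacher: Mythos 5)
Your proof is correct and follows essentially the same route as the paper: both rely on $b_i \geq 0$ (carried over from the proof of Proposition~\ref{lastprop}), $\sum b_i = 2$ via Lemma~\ref{Li} and (\ref{class}), and the adjunction identity (\ref{newad}) to pin $b_i \in \{0,1,2\}$ to genera. The only cosmetic difference is that you route the $g=0$ case through the step function $f(t) = d_t'$ as in Proposition~\ref{Euler}; the paper observes this is unnecessary for $E_{\Sigma_g}$ since Proposition~\ref{attach} rules out twisting, so $\sum b_i = 2$ can be used directly without the monotonicity machinery.
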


\begin{proof}
	Let $e_{\min}$ be the Euler class of the principal $S^1$-bundle $\mu^{-1}(r) \rightarrow M_{\red}$ for some regular value $r$ near the minimum value of $\mu$, and let $e_{\max}$ be the one near the maximum value of $\mu$. By Lemma~\ref{Li}, the Euler class of the principal $S^1$-bundle over $M_{\red}$ near the minimum is $ku - v$ and the Euler class near the maximum is $-k'u + v$ for some integers $k$ and $k'$. By the proof of Proposition~\ref{lastprop}, $b_i \geq 0$ for all $i$. Moreover $\sum_i b_i = 2$ by \eqref{two}. So the only possible values of $b_i$ are $0$,~$1$ or~$2$.

	Consider the case when $g \geq 1$ first. If $b_i = \beta_i = 0$, then \eqref{newad} implies that $\alpha_i = 1 - g_i$. Since $\alpha_i$ and $\beta_i$ cannot be both non-positive by~\eqref{newpos}, $\alpha_i$ should be positive so that $g_i = 0$. If $b_i = \beta_i = 1$, then $g_i = g$ by~\eqref{newad}. If $b_i = 2$, then $b_j = 0$ for all $j \neq i$ by~\eqref{two}. By the previous argument, $b_j=0$ implies $g_j = 0$, and hence all $X_j$'s are spheres for $j \neq i$.

	In the case when $g = 0$, the minimum and the maximum are $2$-spheres. If $g_i \geq 1$, then $\beta_i = b_i \geq 2$ by~\eqref{adg}. Therefore $b_i = 2$ and the other $b_j$'s are zero for $j \neq i$.

	So, the number of interior fixed surfaces which have positive genera is at most two. With the minimal and the maximal fixed surfaces, there are at most four fixed surfaces with positive genera, and these genera are all equal to $g$. Moreover, if the minimal and the maximal surfaces are spheres, then there is at most one fixed surface with non-zero genus.
\end{proof}

	The following proposition says that the number of fixed surfaces with positive genera cannot be two. This finishes the proof of Theorem~\ref{main}.

\begin{proposition}\label{twopositivegenus}
	There is no $6$-dimensional closed semifree Hamiltonian $S^1$-manifold such that $M^{S^1}$ consists of surfaces and the number of surfaces with positive genera is two.
\end{proposition}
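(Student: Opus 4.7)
The plan is to argue by contradiction, supposing that $(M, \omega)$ is a 6-dimensional closed semifree Hamiltonian $S^1$-manifold whose fixed set consists of surfaces with exactly two of them having positive genera. I would split into two cases according to the genus $g$ of the base $\Sigma_g$ of the ruled surface $M_{red}$.

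First I would dispose of the case $g = 0$. Here both $\Sigma_{min}$ and $\Sigma_{max}$ are $2$-spheres, so the two positive-genus fixed surfaces must both lie in the interior of the moment map. Proposition \ref{Euler} (if $M_{red} \cong S^2 \times S^2$) and Proposition \ref{number} (if $M_{red} \cong E_{\Sigma_0}$) already ensure that at most one fixed surface has positive genus whenever the extremal fixed surfaces are spheres, which is an immediate contradiction.

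The substantial case is $g \geq 1$. Here $\Sigma_{min}$ and $\Sigma_{max}$ themselves have genus $g$, accounting for the two positive-genus surfaces, so every interior fixed surface $X_i$ must be a $2$-sphere. From the analyses in Sections 5 and 6 we have $b_i \geq 0$ and $\sum_{X_i \subset M_{mid}} b_i = 2$, while the adjunction formula rules out $b_i = 1$ for spheres (since then $g_i = g \geq 1$). The remaining possibility $b_i = 2$, giving $a_i = 1 - 2g$ in the trivial case or $a_i = 2 - 2g$ in the non-trivial case, is not excluded by homological arithmetic alone. To kill it I would invoke a topological argument: under the $S^2$-bundle projection $\pi : M_{red} \to \Sigma_g$ coming from the Morse-theoretic identification near $\Sigma_{min}$, the restriction $\pi|_{X_i} : S^2 \to \Sigma_g$ is null-homotopic because $\pi_2(\Sigma_g) = 0$ for $g \geq 1$. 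Consequently $X_i$ may be homotoped into a single fiber, its homology class is a multiple of the fiber class $[F]$, and hence $b_i = 0$.

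Combining this with $\sum_{X_i \subset M_{mid}} b_i = 2$, which comes from Lemma \ref{Li} via $e_{max} - e_{min} = -(k+k')u + 2v$ together with the Euler-class jump formula (\ref{class}), one obtains $0 = 2$, a contradiction. The main obstacle is precisely the case $g \geq 1$: the adjunction and positivity constraints on $D(X_i)$ alone allow an interior sphere with $b_i = 2$ to slip through, and the extra topological input $\pi_2(\Sigma_g) = 0$ is what finally eliminates this possibility.
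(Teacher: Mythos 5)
Your proposal is correct and follows essentially the same route as the paper: reduce to the case where both extremal fixed surfaces have genus $g\ge 1$ and every interior fixed surface is a sphere, note that the Euler-class jump together with Lemma \ref{Li} forces $\sum_{X_i\subset M_{mid}} b_i = 2$, and then derive a contradiction from the fact that a $2$-sphere mapped into $\Sigma_g$ ($g\ge 1$) has trivial degree, so every interior sphere satisfies $b_i=0$. The only cosmetic difference is in that last step: the paper computes the degree of $\pi\circ i\colon Z\to\Sigma_g$ via the symplectic area and kills it with the cup-product argument in $H^1$, whereas you invoke $\pi_2(\Sigma_g)=0$ plus homotopy lifting; these express the same topological fact, and your preliminary arithmetic ruling out $b_i=1$ is harmless but superfluous since the degree argument already excludes every nonzero $b_i$.
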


\begin{proof}
	Assume that there exists a closed semifree Hamiltonian $S^1$-manifold such that $M^{S^1}$ consists of surfaces and the number of surfaces with positive genera is two. Let $\mu : M \rightarrow \R$ be the moment map with respect to the given action. If the maximal fixed set is a sphere, then there is at most one fixed surface with positive genus by Proposition~\ref{Euler} and Proposition~\ref{number}. So we assume that the genus $g$ of the extremal fixed surfaces is positive and the interior fixed surfaces are all spheres. By Lemma~\ref{Li}, the Euler class of the principal $S^1$-bundle $\mu^{-1}(r) \rightarrow M_{\red}$ near the minimum is $ku-v$ and the Euler class near the maximum is $-k'u + v$ for some integers $k$ and $k'$. Since
\[e_{\max} = e_{\min} + \sum_{X_i \subset {\Sigma}_{\textrm{mid}}} D(X_i)\]
by
\eqref{class}, there is an interior fixed sphere $Z$ whose dual class is $au + bv$ in $H^2(M_{\red}, \Z)$ for some integer $b \neq 0$.

	Let $i : Z \hookrightarrow M_{\red}$ be the inclusion and let $\pi : M_{\red} \rightarrow \Sigma_g$ be the given fibration. Then the degree of the composition map $f = \pi \circ i : S^2 \rightarrow \Sigma_g$ is not zero. Indeed, let $\sigma$ be the volume form on $\Sigma_g$. Since $\pi^*\sigma$ is zero on the fiber $F$, we have $\int_{M_{\red}}[\pi^*\sigma]u = \int_F \pi^*\sigma = 0$. Hence $[\pi^*\sigma] = cu$ for some integer $c$. Recall that $v$ is represented by a section $s : \Sigma_g \rightarrow M_{\red}$. Since $s^*\pi^* = (\pi \circ s)^* = id|_{H^2(\Sigma_g)}$, $\pi^*$ is injective. Therefore $c \neq 0$. Now $\int_Z f^*([\sigma]) = \int_Z i^*(\pi^*([\sigma])) = \int_Z i^*(cu) = \int_{M_{\red}} (au+bv)cu = bc \neq 0$. But $f$ is a continuous map $S^2 \rightarrow \Sigma_g$ and hence it has degree zero. To see this, choose $x,y \in H^1(\Sigma_g)$ so that $xy \in H^2(\Sigma_g)$ is a fundamental class. Then $f^*(xy) = f^*(x)f^*(y) = 0$ since $H^1(S^2) = 0$. So we are done.
\end{proof}

\section{Construction of $6$-dimensional examples}\label{construction of example}

	In this section we construct some $6$-dimensional closed Hamiltonian semifree $S^1$-manifolds whose fixed components are all surfaces; Example~\ref{4genus} is the case when $M^{S^1}$ contains four fixed surfaces whose genera are non-zero, and Example~\ref{sphere} is the case when $M^{S^1}$ consists of $N$ fixed spheres for an  arbitrary integer $N \geq 4$. Example~\ref{4genus} implies that the maximal number of fixed surfaces with positive genera is four and it is optimal. Example~\ref{sphere} shows that there is no upper bound on the number of fixed components while there are exactly 8 fixed points in the case when $M^{S^1}$ consists of isolated fixed points. See \cite{TW}.

	Let $M$ be a closed Hamiltonian semifree $S^1$-manifold with moment map~$\mu$. Denote the critical values of $\mu$ by
\[\textrm{minimum} = c_0 <  \dots < c_i < \dots < c_k = \textrm{maximum}.\]
	For sufficiently small $\epsilon>0$, we can divide $M$ into compact submanifolds with boundary as follows:
\[\mu^{-1}[c_i-\epsilon, c_i+\epsilon]\quad\mathrm{: \quad critical \,piece},\]
\[\mu^{-1}[c_i+\epsilon, c_{i+1}-\epsilon]\quad\mathrm{: \quad regular \,piece}.\]
Each piece is a Hamiltonian semifree $S^1$-manifold with boundary. So, we will construct these pieces and glue them together along their boundaries.

	To construct regular pieces, we need the following proposition.
\begin{proposition} (\cite{McS} p.156)\label{regular}
	For a given smooth manifold $B$, let $I \subset \R$ be an interval and $\{\omega_t\}_{t \in I}$ be a family of symplectic forms on $B$ such that $$[\omega_t] = [\omega_r] - (t-r)e$$ for $t,r \in I$ where $e \in H^2(B,\Z)$. Moreover, let $\pi : P \rightarrow B$ be a circle bundle with first Chern class~$e$. Then there is an $S^1$-invariant symplectic form~$\omega$ on the manifold $P \times I$ with moment map~$\mu$ equal to the projection $P \times I \rightarrow I$ whose reduced spaces are $(B, \omega_t)$.
\end{proposition}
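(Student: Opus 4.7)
The plan is to construct $\omega$ directly via the standard inverse Duistermaat--Heckman (``minimal coupling'') recipe and then verify the required properties. Let $X$ denote the fundamental vector field of the $S^1$-action on $P$ and choose an $S^1$-invariant connection 1-form $\alpha$ on $\pi : P \to B$ with $\alpha(X) = 1$. Invariance together with Cartan's formula gives $i_X d\alpha = \mathcal{L}_X\alpha - d(i_X\alpha) = 0$, so $d\alpha$ is basic and we may write $d\alpha = -\pi^*\tau$ for a unique closed 2-form $\tau$ on $B$; by Chern--Weil, $[\tau]$ equals $e$ as an element of real cohomology (the sign can be absorbed by replacing $\alpha$ with $-\alpha$ if necessary).

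Differentiating the hypothesis $[\omega_t] = [\omega_r] - (t-r)e$ in $t$ yields $[\dot\omega_t] = -[\tau]$, so $\dot\omega_t + \tau$ is exact on $B$ for every $t \in I$. Using the Hodge--Green operator of a fixed Riemannian metric on $B$ (or, when $B$ is non-compact, a partition-of-unity construction on local primitives), I choose a smooth family of 1-forms $\beta_t$ on $B$ satisfying $d\beta_t = \dot\omega_t + \tau$. With this data, define
\begin{equation}\label{plan-omega}
\omega \;:=\; \pi^*\omega_t + dt \wedge \bigl(\alpha + \pi^*\beta_t\bigr)
\end{equation}
on $P \times I$, where $\pi^*\omega_t$ is viewed as the $t$-dependent pullback of a 2-form on $B$, carrying no $dt$ component.

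I would then verify the four required properties in turn. A direct computation gives $d\omega = dt \wedge \pi^*(\dot\omega_t + \tau - d\beta_t) = 0$ by the choice of $\beta_t$, so $\omega$ is closed. $S^1$-invariance is immediate because $\pi^*\omega_t$ and $\pi^*\beta_t$ are basic, while $\alpha$ and $dt$ are invariant. For nondegeneracy, $(dt)^2 = 0$ and $\omega_t^{n+1} = 0$ on $B$ (where $\dim B = 2n$), so the top power collapses to $\omega^{n+1} = (n+1)\,\pi^*(\omega_t^n) \wedge dt \wedge \alpha$, which is nowhere zero since $\omega_t^n$ is a volume form on $B$, $dt$ detects the $I$-direction, and $\alpha$ pairs nontrivially with the vertical $S^1$-direction. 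Finally, $i_X\omega = -dt$ (a harmless sign that can be flipped by negating $\alpha$), so the projection $P \times I \to I$ is indeed the moment map; and restricting $\omega$ to a level set $P \times \{t_0\}$ kills the $dt$ term and yields $\pi^*\omega_{t_0}$, which descends through $\pi$ to $\omega_{t_0}$ on $B = P/S^1$, giving the prescribed reduced spaces.

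The main obstacle is promoting the cohomological identity $[\dot\omega_t + \tau] = 0$ to a smooth form-level family of primitives $\{\beta_t\}$; once these are in hand, both the formula \eqref{plan-omega} and its verification are purely formal. When $B$ is compact, the Hodge decomposition furnishes canonical primitives $\beta_t = d^*G(\dot\omega_t + \tau)$, whose smooth dependence on $t$ follows from the smoothness of $\dot\omega_t$ in $t$ and the continuity of the Green operator; in the non-compact case one obtains $\beta_t$ by stitching local primitives with a partition of unity, again preserving smoothness in $t$ pointwise on $B$.
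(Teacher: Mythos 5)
Your construction is the standard ``minimal coupling'' / inverse Duistermaat--Heckman argument, which is exactly what \cite{McS} (to which the paper defers without proof) carries out; the ansatz $\omega = \pi^*\omega_t + dt\wedge(\alpha + \pi^*\beta_t)$, the verification that $d\omega = dt\wedge\pi^*(\dot\omega_t + \tau - d\beta_t)$, and the collapse of the top power to $(n+1)\pi^*(\omega_t^n)\wedge dt\wedge\alpha$ are all correct. Two small points deserve attention. First, the sign bookkeeping: with $\alpha(X)=1$ your formula gives $i_X\omega = -dt$, so the moment map as defined is $-t$; this cannot be fixed by ``negating $\alpha$'' (that would destroy $\alpha(X)=1$), but rather by replacing $t$ with $-t$ or by choosing the opposite orientation on the fibre so that $[\tau]=-e$ -- the paper's own sign remark in the introduction signals that such a consistent choice must be made. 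Second, the parenthetical claim for non-compact $B$ is not correct as stated: if $\beta^{(i)}_t$ are local primitives of $\dot\omega_t + \tau$ and $\{\rho_i\}$ is a partition of unity, then $d\bigl(\sum_i \rho_i\beta^{(i)}_t\bigr) = (\dot\omega_t+\tau) + \sum_i d\rho_i\wedge\beta^{(i)}_t$, and the correction term does not vanish, so naive stitching does not produce a primitive. This is harmless here, since the paper only applies the proposition with $B$ a compact ruled surface, where your Hodge-theoretic choice $\beta_t = d^*G(\dot\omega_t + \tau)$ gives a smooth $t$-family of primitives and the argument goes through.
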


	Now, we will construct a regular piece using Proposition~\ref{regular}. The construction is as follows. Let $B = \Sigma_g \times S^2$ and let $I$ be a closed interval in $\R$. Let $\{u,v\}$ be a basis of $H^2(B,\Z)$ as in Section~\ref{symplectic ruled surface}. Then any class $cu + dv$ with $c>0$ and $d>0$ can be represented by a standard split symplectic form $c\pi_1^*\sigma_1 + d\pi_2^*\sigma_2$ where $\pi_1 : \Sigma_g \times S^2 \rightarrow \Sigma_g$ and $\pi_2 : \Sigma_g \times S^2 \rightarrow S^2$ are the canonical projections and $\sigma_1$, $\sigma_2$ are the normalized area forms on $\Sigma_g$ and $S^2$ respectively. Hence if we have $\{\omega_t = c_t\pi_1^*\sigma_1 + d_t\pi_2^*\sigma_2\}_{t \in I}$ with $c_t, d_t > 0$ for $t\in I$ satisfying $[\omega_t] = [\omega_r] - (t-r)e$ for some $e \in H^2(B,\Z)$, then we can construct a Hamiltonian semifree $S^1$-manifold $(P \times I, \omega)$ with moment map~$\mu$ equal to the projection $P \times I \rightarrow I$ and with reduced spaces $(B, \omega_t)$ where $P$ is the principal $S^1$-bundle over~$B$ whose first Chern class is $e \in H^2(B,\Z)$.

	Next we construct critical pieces. First, we construct a Hamiltonian $S^1$-manifold with boundary such that there is only one fixed component~$B$ as a minimal fixed set with respect to the moment map. The following proposition is from~\cite{L2}. We give a proof for reader's convenience.

\begin{proposition}\label{construction}\cite{GS,L2}
	Let $B$ be a compact Riemann surface with symplectic form~$\omega_B$, and let $E \rightarrow B$ be a $\C^2$-bundle with first Chern number $\langle c_1(E), B \rangle = b_{\min} \in \Z$. Then there is a symplectic structure~$\omega_E$ on a neighborhood $E_{\delta}$ of $B$ in~$E$ such that $\omega_E|_B = \omega_B$. Moreover, there is a Hamiltonian $S^1$ action on~$E_{\delta}$, by shrinking it if necessary, such that $B$ is the unique fixed component and have the minimum value of the moment map.
\end{proposition}

\begin{proof}
	Choose a Hermitian metric on~$E$ and consider the unitary frame bundle~$F$ of~$E$. It is a principal $U(2)$-bundle over~$B$. If we choose a connection on~$F$, then it gives a projection $TF \rightarrow VF$ from the tangent bundle of~$F$ to the bundle of vertical tangent vectors, and this gives the dual map $i : V^*F \rightarrow T^*F$. Let $\omega_F$ be the canonical symplectic form on the cotangent bundle~$T^*F$. Then $\omega := i^*\omega_F + \pi^*\omega_B$ is a symplectic form on a neighborhood~$V$ of the zero section of~$V^*F$, where $\pi : V^*F \rightarrow B$ is the composition of projection maps. The cotangent bundle $(T^*F, \omega_F)$ has a canonically lifted Hamiltonian $U(2)$-action, and we may choose $V$ invariant so that the $U(2)$-action on $(V, \omega)$ is Hamiltonian. Let $\Phi : (V \times \C^2, -\omega \oplus \omega_{st}) \rightarrow u(2)^*$ be a moment map of the $U(2)$-action, where $\omega_{st}$ denotes the standard symplectic structure on~$\C^2$, and $U(2)$ acts on $\C^2$ in the canonical way. Define a Hamiltonian $S^1$-action on $(V \times \C^2, -\omega \oplus \omega_{st})$ such that it acts on the first factor trivially and acts on the second factor as the standard diagonal action with a weight $(1,1)$ with respect to the orientation on~$\C^2$ given by $\omega_{st}$.

	On a local trivialization chart $\frak U = \cup \hskip0.5mm U_i$ where $U_i \subset B$, $V^*F$ is locally isomorphic to $U_i \times U(2) \times u(2)^*$ and the moment map for the $U(2)$-action on $U_i \times U(2) \times u(2)^*$ is just the projection onto~$u(2)^*$. Therefore $0 \in u(2)^*$ is a regular value of~$\Phi$ and we can identify $E$ with
\begin{equation}
	E = F \times_{U(2)} \C^2 = \Phi^{-1}(0)/U(2).
\end{equation}
Then the reduced symplectic form~$\omega_E$ defines a symplectic structure on~$E_{\delta}$. Note that the $S^1$-action commutes with the $U(2)$-action on $V \times \C^2$. Hence the $S^1$-action descends to $E_{\delta}$, and the action is Hamiltonian with moment map $\mu(x,z_1,z_2) = \frac{1}{2}(|z_1|^2 + |z_2|^2)$ near the zero section of $U_i \times \C^2$. Therefore $E_{\delta}$ is a Hamiltonian semifree $S^1$-manifold with fixed point set $B$, and the moment map~$\mu$ has the minimum at~$B$.
\end{proof}

\begin{remark}
	By Lemma~\ref{Li}, the Euler class~$e_{\min}$ of the principal $S^1$-bundle $\mu^{-1}(t) \rightarrow \mu^{-1}(t)/S^1$ near the minimum value is determined by the first Chern number~$b_{\min}$. Moreover, the above construction is valid for arbitrary~$b_{\min}$. Therefore we can construct such a critical piece~$E_{\delta}$ with any given~$e_{\min}$.
\end{remark}

	Similarly, we can construct a Hamiltonian $S^1$-manifold with boundary such that there is only one fixed component~$B$ as a maximal fixed set with respect to the moment map with any given~$e_{\max}$.

	To construct non-extremal critical pieces containing fixed components of index~$2$, we need the following theorem due to Guillemin and Sternberg.
\begin{theorem} \label{GSthm} \cite[Section~12]{GS}
	Let $(M_0,\omega)$ be a  symplectic manifold, $X$ a symplectic submanifold of $M_0$, and $\pi :P \rightarrow M_0$ a principal $S^1$-bundle over $M_0$. Then for a given small open interval $I=(-\epsilon, \epsilon)$, there is a unique semifree Hamiltonian $S^1$-manifold $M_I$ with moment map $\Phi : M_I \rightarrow \R$ satisfying the following.

	\begin{itemize}
		\item[i)] $\Phi$ is proper and $\Phi(M_I) = I$.
		\item[ii)] For $-\tau<0 , \Phi^{-1}(-\tau)$ is $S^1$-equivariantly diffeomorphic to~$P$.
		\item[iii)]For $-\tau<0$, the reduction of $M_I$ at $-\tau$ is the manifold $M_0$ with symplectic form $\omega + \tau d\alpha$ where $\alpha$ is a connection 1-form on $P$.
		\item[iv)]$X$ is the only fixed set of index $2$ with respect to $\Phi$.
		\item[v)] For $\tau>0$, the reduction of $M_I$ at $\tau$ is the blow up, $M_+$ of $M_0$ along $X$ with symplectic form $\mu_{\tau} - \beta^* \tau d\alpha$ where $\mu_{\tau}$ is the $\tau$-blow up form and $\beta$ is the blowing down map.
	\end{itemize}
	Here, the $\tau$-blow up form $\mu_{\tau}$ is a symplectic form on blown-up space of $\tau$-amount.

\end{theorem}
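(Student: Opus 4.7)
The plan is to construct $M_I$ by gluing an equivariant normal form around $X$ to two regular cylinders, one on each side of the critical level. On the negative side, I would apply Proposition \ref{regular} to the family $\{\omega + \tau d\alpha\}_{\tau \in (-\epsilon,0)}$ of symplectic forms on $M_0$ and the given principal $S^1$-bundle $P$ to obtain a Hamiltonian $S^1$-manifold modeled on $P \times (-\epsilon,0)$ with moment map equal to the projection onto the interval; this verifies (i)--(iii) on the negative side immediately. An analogous cylinder is constructed on the positive side using the blown-up base $M_+$ and the principal $S^1$-bundle $P_+ \to M_+$ whose Euler class is dictated by (\ref{class}), namely $e(P) + D(X)$ pulled back to $M_+$.

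For the critical piece, I would build a local model on the total space of the rank-two complex vector bundle $L \oplus \nu_X$ over $X$, where $\nu_X$ is the symplectic normal bundle of $X$ in $M_0$ (with weight $+1$) and $L$ is the complex line bundle associated to $P|_X$ (with weight $-1$). The symplectic form is assembled via Weinstein minimal coupling, as in the construction leading to Proposition \ref{construction}, so that the fibrewise moment map is $(v,z) \mapsto \tfrac12(|v|^2 - |z|^2)$. A direct computation in the fibre then shows that the zero section $X$ is the unique index-$2$ critical submanifold of this model, that its level-$\tau$ reduction for $\tau < 0$ is a neighborhood of the zero section of $\nu_X$ (thus recovering a neighborhood of $X$ in $M_0$), and that for $\tau > 0$ the reduction is the total space of $\mathcal{O}_{\mathbb{P}(\nu_X)}(-1)$ over $X$, which is the symplectic blow-up of $\nu_X$ along its zero section. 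This verifies (iv) and gives a local version of (v).

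The hard step is the gluing. On each overlap between the local model and the adjacent regular cylinder, the reduced spaces are cohomologous by the Duistermaat--Heckman formula (\ref{DH}), and the principal $S^1$-bundles match equivariantly via the identification of $P|_X$ with the unit circle bundle of $L$. An equivariant relative Moser argument, combined with the equivariant symplectic neighborhood theorem at $X$, then produces $S^1$-equivariant symplectomorphisms on collar neighborhoods that allow the three pieces to be glued smoothly. Uniqueness of $M_I$ follows from a second application of the same argument: any two candidates agree on their regular parts by Proposition \ref{regular} and on equivariant tubular neighborhoods of $X$ by the equivariant Darboux theorem, and these identifications match on collars. The main obstacle lies in keeping the reduced forms compatible on the overlap with the fibre coordinates coming from the Weinstein model: one must choose the connection on $P$ so that near $X$ the forms $\omega + \tau d\alpha$ agree, on the nose rather than only cohomologously, with the reductions of the local model, ensuring that the Moser isotopy is $S^1$-equivariant and respects the moment map.
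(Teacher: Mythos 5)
The paper does not give its own proof of this theorem; it simply cites \cite{GS} and, in the remark immediately following, describes the Guillemin--Sternberg construction as a surgery on $P\times I$: one removes a small neighborhood $U$ of $\pi^{-1}(X)\times I$ and glues an open model back in along $\partial(P\times I - U)$, with the symplectic structure unchanged outside $U$. Your three-piece decomposition (two regular cylinders plus a Weinstein-style critical piece on $L\oplus\nu_X$) is in the same spirit --- it is essentially the same local surgery, just repackaged as a gluing of three explicit Hamiltonian $S^1$-pieces rather than a single excision-and-replacement on $P\times I$. The payoff of your version is that the local model makes (iv) and the local version of (v) transparent and computational; the GS packaging makes uniqueness and the ``same outside $U$'' property more visible.

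Two points in your outline need strengthening. First, the gluing: you propose to match the reduced forms on the overlap ``on the nose'' by a judicious choice of connection on $P$ near $X$. This is in general not achievable by connection choice alone; the forms from the Weinstein model and from $\omega+\tau\,d\alpha$ will only agree cohomologously in the collar, and one needs a genuine equivariant Moser isotopy compatible with the moment maps (i.e.\ an application of the equivariant coisotropic embedding / Marle--Guillemin--Sternberg normal form in a collar), not just a connection normalization. Second, the uniqueness argument is thin: Proposition~\ref{regular} is an existence statement, not a rigidity statement, so ``the regular parts agree by Proposition~\ref{regular}'' is not correct as written; the uniqueness of the regular parts up to equivariant symplectomorphism is a separate (true) fact coming from the coisotropic embedding theorem, and one must additionally show that the identifications on the two collars can be chosen consistently with the equivariant tubular neighborhood identification at $X$. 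Finally, your local model $L\oplus\nu_X$ as stated is ``rank two'', which implicitly assumes $X$ has real codimension two in $M_0$; that is the case actually used in the paper, but the statement as cited is for arbitrary symplectic $X\subset M_0$, so the model should be $L\oplus\nu_X$ of rank $1+\mathrm{rk}_{\mathbb{C}}\,\nu_X$.
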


	Therefore, if we have a symplectic manifold $(M_0,\omega)$, an integral cohomology class $e \in H^2(M_0,\Z)$, and a symplectic submanifold $X \subset M_0$, then we can construct a non-extremal critical piece $M$ with moment map $\mu : M \rightarrow (-\epsilon, \epsilon)$ such that $M_{\red} \cong M_0$, $e_- = e$ where $e_-$ is the Euler class of the principal $S^1$-bundle $\mu^{-1}(-\frac{\epsilon}{2})$ over $M_{\red}$, and $M^{S^1} = X$ of index 2. Moreover, such critical piece is unique up to $S^1$-equivariant symplectomorphism.

\begin{remark}
	Note that $M_I$ in Theorem~\ref{GSthm} is constructed from $P \times I$ by removing a small neighborhood $U$ of $\pi^{-1}(X) \times I$, and by gluing some open set along the boundary of $(P \times I) - U$. Moreover, this surgery does not change the symplectic structure outside of $U$. Hence $X$ need not be connected. For more details, see \cite{GS} Section~12.
\end{remark}

	It remains to glue these ``local pieces" along their boundaries. Let $(M,\omega)$ and $(N,\omega')$ be two local pieces with moment maps $\mu_M : M \rightarrow [a,b] \in \R$ and $\mu_N : N \rightarrow [b,c] \in \R$. Let $(M_b,\omega_b)$, respectively $(N_b,\omega_b')$, be a reduced space at $b$ with respect to $\mu_M$, respectively $\mu_N$. Assume that there is a symplectomorphism $\phi : (M_b,\omega_b) \rightarrow (N_b, \omega_b')$ such that $\phi^*e_N = e_M$ where $e_N$ is the Euler class of the principal $S^1$-bundle $\mu_N^{-1}(b)$ and $e_M$ is the one of $\mu_M^{-1}(b)$.

\begin{lemma}\cite[Lemma~13]{L2}\label{glue}
	The symplectomorphism $\phi$ induces an $S^1$-equivariant diffeomorphism $\widetilde{\phi} : \mu_M^{-1}(b) \rightarrow \mu_N^{-1}(b)$.
\end{lemma}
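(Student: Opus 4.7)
The plan is to use the fact that principal $S^1$-bundles over a smooth manifold are classified up to isomorphism by their Euler class in $H^2(-,\Z)$, so the hypothesis $\phi^* e_N = e_M$ will immediately give an isomorphism of the relevant bundles covering $\phi$.

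First, I would form the pullback principal $S^1$-bundle $\phi^{\ast}\bigl(\mu_N^{-1}(b)\bigr) \to M_b$ via the symplectomorphism $\phi : M_b \to N_b$. By naturality of the Euler class, this pullback bundle has Euler class $\phi^{\ast} e_N$, which equals $e_M$ by hypothesis. Hence it has the same classifying data as the principal $S^1$-bundle $\mu_M^{-1}(b) \to M_b$, and by the classification theorem there exists an $S^1$-equivariant bundle isomorphism
\[
\psi : \mu_M^{-1}(b) \longrightarrow \phi^{\ast}\bigl(\mu_N^{-1}(b)\bigr)
\]
covering the identity on $M_b$.

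Next, I would compose $\psi$ with the canonical $S^1$-equivariant map $\Phi : \phi^{\ast}(\mu_N^{-1}(b)) \to \mu_N^{-1}(b)$ from the pullback construction, which by definition covers $\phi$. Setting $\widetilde{\phi} := \Phi \circ \psi$ yields an $S^1$-equivariant smooth map $\mu_M^{-1}(b) \to \mu_N^{-1}(b)$ that descends (via the quotient by the free $S^1$-action on the boundary level) to $\phi$ itself. Since both $\psi$ and $\Phi$ are $S^1$-equivariant diffeomorphisms (the latter because $\phi$ is a diffeomorphism), $\widetilde{\phi}$ is an $S^1$-equivariant diffeomorphism as well.

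The only point that requires care, and arguably the main obstacle, is the invocation of the classification of principal $S^1$-bundles: one must check that on the finite-dimensional, not necessarily simply connected base $M_b$, two principal $S^1$-bundles with equal first Chern class are indeed isomorphic as $S^1$-bundles (equivalently, $BS^1 = K(\Z,2)$, so $[M_b, BS^1] = H^2(M_b, \Z)$). This is standard, but it is the conceptual content of the lemma; once it is in hand, producing $\widetilde{\phi}$ is formal. A small additional verification is that the bundle isomorphism $\psi$ can be chosen smoothly (rather than merely topologically), which follows from standard smoothing/approximation arguments for sections of the associated gauge bundle, or equivalently from the fact that any two smooth connections on isomorphic smooth $S^1$-bundles differ by a smooth gauge transformation.
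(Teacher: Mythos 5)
The paper states Lemma~\ref{glue} without proof, attributing it to \cite{L2} and \cite{MD6}, so there is no in-text argument to compare against. Your proposal is correct and is exactly the standard proof one would expect those references to invoke: the Euler class hypothesis $\phi^*e_N = e_M$ plus the classification $[M_b,BS^1]\cong H^2(M_b,\Z)$ produces a principal-bundle isomorphism $\psi$ from $\mu_M^{-1}(b)$ to the pullback $\phi^*\mu_N^{-1}(b)$ covering the identity, and composing with the canonical covering map of the pullback gives the desired $S^1$-equivariant diffeomorphism $\widetilde\phi$. Your remark about smoothness is also the right one to flag; since gauge transformations of a smooth principal $S^1$-bundle are just smooth maps $M_b\to S^1$, smoothing a continuous isomorphism is routine.

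One small addition worth recording for the intended application (gluing the two local pieces into a symplectic manifold): the restriction of the symplectic form of each piece to its boundary level is the pullback of the reduced form, i.e.\ $\omega|_{\mu_M^{-1}(b)}=\pi_M^*\omega_b$ and $\omega'|_{\mu_N^{-1}(b)}=\pi_N^*\omega_b'$. Because $\widetilde\phi$ covers the symplectomorphism $\phi$, one automatically gets $\widetilde\phi^*\bigl(\pi_N^*\omega_b'\bigr)=\pi_M^*\phi^*\omega_b'=\pi_M^*\omega_b$, so $\widetilde\phi$ matches the presymplectic data on the boundaries. This is what makes the coisotropic (equivariant) neighborhood theorem applicable in the subsequent gluing step; it is not part of the lemma's statement but is the reason an arbitrary choice of $\psi$ suffices.
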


	In the case when the reduced spaces of local pieces are diffeomorphic to a ruled surface, we need the following theorem.

\begin{theorem} \label{LM} \cite{LM}
	Let $\omega_0$ and $\omega_1$ be two cohomologous compatible symplectic forms on a ruled surface. Then they are isotopic.
\end{theorem}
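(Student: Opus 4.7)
The standard tool for such an isotopy statement is Moser's stability theorem, which reduces the problem to producing a smooth path of cohomologous symplectic forms from $\omega_0$ to $\omega_1$: once a family $\{\omega_t\}_{t\in[0,1]}$ is found, integration of a time-dependent vector field $X_t$ satisfying $\iota_{X_t}\omega_t=-\beta_t$ with $d\beta_t=\tfrac{d}{dt}\omega_t$ yields a diffeomorphism isotopy $\phi_t$ with $\phi_t^*\omega_t=\omega_0$. Thus the essential task is to construct such a path along which every intermediate $\omega_t$ is symplectic and compatible with the ruling $\pi:M\to\Sigma_g$.

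My first attempt is the straight-line interpolation $\omega_t=(1-t)\omega_0+t\omega_1$. It is closed, lies in the common cohomology class, and restricts to a positive area form on every fiber $F\cong S^2$, since both endpoints do and $\int_F\omega_0=\int_F\omega_1$ by cohomological pairing with $[F]$. Hence non-degeneracy in the fiber direction is automatic. The obstacle is global non-degeneracy in four dimensions, i.e.\ the sign of
\[
\omega_t^2=(1-t)^2\omega_0^2+2t(1-t)\,\omega_0\wedge\omega_1+t^2\omega_1^2.
\]
In a local frame $(e_1,e_2,e_3,e_4)$ with $e_1,e_2$ spanning the fiber tangent space $V_p$ and $e_3,e_4$ spanning $V_p^{\omega_0}$, one writes $\omega_0=\lambda\,e^1\wedge e^2+\mu\,e^3\wedge e^4$ and $\omega_1=\sum a_{ij}\,e^i\wedge e^j$, and finds $\omega_0\wedge\omega_1=(\lambda a_{34}+\mu a_{12})\,e^1\wedge e^2\wedge e^3\wedge e^4$. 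Compatibility of $\omega_1$ with the ruling forces $a_{12}>0$ but leaves $a_{34}$ unconstrained, so the cross term can a priori be negative.

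To surmount this, my plan is a two-step reduction. First, deform each $\omega_i$ within its cohomology class and within the space of ruling-compatible forms to a common model form $\omega^V+K\,\pi^*\tau_\Sigma$, where $\omega^V$ is a fiberwise area form and $\tau_\Sigma$ an area form on the base, with $K$ determined by $[\omega_i]$. This normalization is achieved by a fiberwise Moser argument (the space of area forms on $S^2$ of a fixed total area is contractible), globalized via the ruling structure, followed by an exact horizontal correction. Second, once both endpoints are in this model form, straight-line interpolation is manifestly non-degenerate, since the $a_{34}$ coefficient is now $K>0$ and is fixed throughout. As a safer fallback, one may use the inflation trick: replace $\omega_i$ by $\omega_i+s\,\pi^*\tau_\Sigma$ for $s\gg 0$, perform the linear interpolation where non-degeneracy holds automatically, and then deflate by a further Moser isotopy back to the original cohomology class.

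The hard part is the first step, the global fiberwise normalization, since upgrading fiber-by-fiber Moser isotopies to a single ambient diffeomorphism of the total space requires a careful choice of horizontal distribution and the verification that the resulting interpolating form remains symplectic. The key inputs that make this feasible are the two-dimensionality of the fiber (so $H^2(F;\R)\cong\R$ pins down fiber areas by the cohomology class) and Moser stability applied simultaneously on fibers and globally. Once this normalization is carried out, the remainder of the argument is a routine convexity-plus-Moser computation.
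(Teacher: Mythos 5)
The paper does not prove this theorem; it cites it from Lalonde--McDuff \cite{LM}. So the comparison must be against what that paper actually does, and here your proposal falls short in two places, both of which are where the real content of the theorem lives.

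Your ``model form'' normalization is circular. After a parametrized fiberwise Moser you can indeed arrange that $\Phi^*\omega$ restricts to a standard area form on every fiber, and the correction form $\Phi^*\omega - \omega^V$ is then closed, vanishes on fibers, and is cohomologous to a pullback $\pi^*\eta$ plus an exact form $d\gamma$. But to land on the split model $\omega^V + K\pi^*\tau_\Sigma$ you must remove $d\gamma$ while staying symplectic, and the linear path $\omega^V + \pi^*\eta + (1-t)d\gamma$ has no reason to stay nondegenerate for $t$ near $1$ --- the exact term can be large and sign-indefinite in the horizontal directions. Deciding whether that path (or any other cohomologous path) can be made symplectic is precisely the question of whether the space of ruling-compatible forms in a fixed class is connected, which is the theorem itself. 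You have reduced the problem to the problem.

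Your inflation fallback is the more promising direction, but the final ``deflate by a further Moser isotopy'' step is not available. The interpolation $\Omega_t = (1-t)\omega_0 + t\omega_1 + s\,\pi^*\tau_\Sigma$ does have constant cohomology class and is symplectic for $s\gg 0$, so Moser produces a diffeomorphism $\phi$ isotopic to the identity with $\phi^*(\omega_1 + s\pi^*\tau_\Sigma) = \omega_0 + s\pi^*\tau_\Sigma$. This is an isotopy between the \emph{inflated} forms, not between $\omega_0$ and $\omega_1$; the deflation paths $\omega_i + s'\pi^*\tau_\Sigma$, $s'\in[0,s]$, have non-constant cohomology class, so Moser cannot be applied to them, and $\phi$ is not fiber-preserving so one cannot cancel the $s\pi^*\tau_\Sigma$ terms on both sides. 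What you have actually shown is that $\omega_0$ and $\omega_1$ are deformation equivalent. Upgrading a deformation between cohomologous forms to an isotopy on a ruled surface is McDuff's theorem in \cite{MD4} (quoted in the Remark before Example \ref{4genus}), and that upgrade is itself a deep result relying on Taubes--Seiberg--Witten theory and pseudoholomorphic curves. The genuine proof in \cite{LM} is in the same spirit as your inflation idea but replaces naive inflation by $\pi^*\tau_\Sigma$ with Lalonde's inflation along $J$-holomorphic curves, combined with Gromov/Taubes/McDuff-type existence and uniqueness results for such curves; none of this pseudoholomorphic machinery appears in your proposal, and without it the argument does not close.
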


	Combining Theorem~\ref{LL} and Theorem~\ref{LM}, we have the well-known theorem which classifies symplectic structures on a ruled surface.

\begin{corollary}\label{combine}
	There is a symplectomorphism between two cohomologous symplectic forms on a ruled surface. In particular, we may choose a symplectomorphism which acts on homology trivially.
\end{corollary}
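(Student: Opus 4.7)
The plan is to combine the two preceding results (Theorem \ref{LL} and Theorem \ref{LM}) via a Moser-type argument. Given two cohomologous symplectic forms $\omega_0$ and $\omega_1$ on a ruled surface $M$, I will first reduce to the compatible-with-ruling case and then use the isotopy theorem to produce the desired symplectomorphism.

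First, I would apply Theorem \ref{LL} to each $\omega_i$: there exist symplectic forms $\omega_i'$ on $M$ compatible with the ruling and symplectomorphisms $\psi_i : (M,\omega_i) \to (M,\omega_i')$ which act trivially on $H^*(M,\Z)$. Since $\psi_i^* \omega_i' = \omega_i$ and $\psi_i^*$ is the identity on cohomology, we have $[\omega_i'] = [\omega_i]$. By the assumption $[\omega_0] = [\omega_1]$, it follows that $[\omega_0'] = [\omega_1']$, and both $\omega_0'$ and $\omega_1'$ are compatible with the ruling.

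Next, Theorem \ref{LM} provides an isotopy $\{\omega_t'\}_{t\in[0,1]}$ of compatible symplectic forms on $M$ interpolating between $\omega_0'$ and $\omega_1'$. Since $[\omega_t']$ is constant in $t$ (it equals $[\omega_0']$ because the isotopy lies in a single cohomology class, or equivalently because $\frac{d}{dt}[\omega_t'] = 0$ for any smooth path of cohomologous forms), Moser's trick yields a smooth family of diffeomorphisms $\varphi_t : M \to M$ with $\varphi_0 = \mathrm{id}$ and $\varphi_t^* \omega_t' = \omega_0'$. In particular, $\varphi_1 : (M,\omega_0') \to (M,\omega_1')$ is a symplectomorphism isotopic to the identity, hence acts trivially on $H^*(M,\Z)$.

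Finally, I would form the composition
\[
\Phi = \psi_1^{-1} \circ \varphi_1 \circ \psi_0 : (M, \omega_0) \longrightarrow (M, \omega_1),
\]
which is a symplectomorphism as a composition of symplectomorphisms. Since each of $\psi_0$, $\varphi_1$, and $\psi_1^{-1}$ acts trivially on $H^*(M,\Z)$, so does $\Phi$, giving the second statement. No step is a serious obstacle here, since both Theorems \ref{LL} and \ref{LM} are invoked as black boxes; the only small care needed is to keep track that the symplectomorphisms produced in each step act trivially on homology, which is automatic for Moser isotopies and is explicitly included in Theorem \ref{LL}.
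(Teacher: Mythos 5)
Your proposal is correct and is exactly the intended argument: the paper states Corollary \ref{combine} as an immediate consequence of combining Theorem \ref{LL} with Theorem \ref{LM}, and your write-up simply makes explicit the reduction to compatible forms, the Moser step from the isotopy, and the composition.
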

Hence if $M \cong N$ is a ruled surface and if $[\omega] = [\omega']$ and $e_N = e_M$, then we can glue $M$ and $N$ along $\mu^{-1}_M(b)$.

\begin{remark}
	Although what we need is existence, we can also think about uniqueness. By a result of Gonzalez~\cite{Gon}, a Hamiltonian $S^1$-manifold $(M, \omega)$ is uniquely determined by its fixed point data up to equivariant symplectomorphism when $(M_{\red}, \{\omega_t\})$ is rigid.
\end{remark}

\begin{definition} \cite{Gon}
	Let $B$ be a manifold and $\{\omega_t\}$ be a smooth family of symplectic forms on $B$. The pair $(B, \{\omega_t\})$ is said to be \textit{rigid} if
	\begin{itemize}
		\item[(i)] Symp$(B$, $\omega_t) \cap$ Diff$_0(B)$ is path connected for all $t$.
		\item[(ii)] Any deformation between any two cohomologous symplectic forms which are deformation equivalent to $\omega_t$ on $B$ may be homotoped through deformations with fixed endpoints into an isotopy.
	\end{itemize}
\end{definition}

	According to Proposition 1.6 in~\cite{MD4}, (i) holds for $(B = \Sigma_g \times S^2, \omega_t )$ when $\frac{a}{b} \geq [g/2]$, where $[\omega_t] = au + bv$. By Theorem 1.2 in~\cite{MD3}, (ii) holds for any ruled surfaces. Combining these results, we can see that the following examples with the given data are uniquely determined up to equivariant symplectomorphism.

\begin{example} \label{4genus}
	Let $(\Sigma_{\min}, \omega_{\min}) = (\Sigma_g, \sigma)$ with $g \geq 1$, where $\sigma$ is the normalized symplectic form on $\Sigma_g$. Consider a trivial $\C^2$-bundle $E_{\min}$ over $\Sigma_{\min}$. By Proposition~\ref{construction}, there is a symplectic form on $E$ and a Hamiltonian $S^1$-action on $E$ with moment map $\mu$ with $\mu(\Sigma_{\min}) = 0$. Then for a sufficiently small $\epsilon$, $\{x \in E_{\min} \mid \mu(x) \leq \epsilon\}$ gives a minimal critical piece, still denoted by $E_{\min}$. By Lemma \ref{Li}, we have the reduced space $M_{\red} = \Sigma_g \times S^2$ and $e_{\min} = -v$, where $\{u,v\}$ is the basis of $H^2(\Sigma_g \times S^2,\Z)$ chosen in Section 3. Therefore, the boundary of $E_{\min}$ is symplectomorphic to a principal $S^1$-bundle over $(\Sigma_g \times S^2, \omega_{\epsilon})$ whose Euler class is $-v$ and $[\omega_{\epsilon}] = u + \epsilon v$.

	Now, let $P$ be a principal $S^1$-bundle over $\Sigma_g \times S^2$ whose Euler class is $-v$, and let $\{ \omega_t = \sigma + t\tau \}_ {t \in [\epsilon, 1-\epsilon]}$ be a family of symplectic forms on $\Sigma_g \times S^2$ where $\tau$ is the normalized symplectic form on $S^2$. By Proposition \ref{regular}, we have a regular piece $E = (P \times [\epsilon, 1-\epsilon], \omega)$ such that $[\omega_t] = u + tv$ and it is well-glued to $E_{\min}$ by Corollary~\ref{combine}.

	For $(\Sigma_g \times S^2, \sigma + \tau)$, $P$, and a submanifold $X = \Sigma_g\times\{p_1, p_2\}$ for $p_1 \neq p_2$, we have a critical piece $E_{mid}$ for an open interval $(1-\epsilon, 1+\epsilon)$ by Theorem \ref{GSthm}. It is easy to check that this critical piece can be well-glued to the regular piece $E$.

	Note that $D(X) = 2v$. Hence, the Euler class of the principal $S^1$-bundle $\mu^{-1}(1+\frac{\epsilon}{2})$ is $v$. Let $Q$ be a principal $S^1$-bundle over $\Sigma_g \times S^2$ whose first Chern class is $v$, and let $\{ \omega_t = \sigma + (2-t)\tau \}_ {t \in [1+\epsilon, 2-\epsilon]}$ be a family of symplectic forms on $\Sigma_g \times S^2$. Again by Proposition \ref{regular}, we have a regular piece $E' = (P' \times [1+\epsilon, 2-\epsilon], \omega')$ such that $[\omega_t'] = u + (2-t)v$. Of course it is well-glued to $E_{mid}$ by Corollary \ref{combine}.

	Finally, consider a trivial $\C^2$-bundle over $(\Sigma_g, \sigma)$. By a similar construction of a minimal critical piece, except for considering a diagonal $S^1$-action on $\C^2$ with weight $(-1,-1)$, we have a maximal critical piece which can be well-glued to $E'$. This finishes the construction. We choose $K \geq g/2$ to guarantee uniqueness.

        \begin{displaymath}
            \begin{array}{lll}
                0 < t < 1, & [\omega_t] = Ku+tv, & e = -v.\\[0.5em]
                t=1, & X = \Sigma_g\times\{p_1, p_2\}, & D(X) = 2v.\\[0.5em]
                1 < t < 2, & [\omega_t] = Ku+(2-t)v, & e= v.\\[0.5em]
            \end{array}
        \end{displaymath}
        The above Hamiltonian semifree $S^1$-manifold has the fixed point set consisting of four surfaces of genus $g$, and $M_{\red} \cong \Sigma_g \times S^2$.

        Similarly, we can construct a 6-dimensional closed Hamiltonian semi-free $S^1$-manifold $M$ with the information below.
        In this case, $M$ has a fixed point set consisting of $N$ spheres  for any $N\geq 1$ and four surfaces of genus $g$. Here $M_{\red} = \Sigma_g \times S^2$ with $g\geq 1$. Fix a positive number $K \geq N + g$. Then
        \begin{displaymath}
            \begin{array}{lll}
                0 < t < 2, & [\omega_t] = Ku+tv, & e = -v.\\[0.5em]
                t=2, & X_1 = \Sigma_g\times\{p_1, p_2\}, & D(X_1) = 2v.\\[0.5em]
                2 \leq t < 3, & [\omega_t] = Ku+(4-t)v, & e = v.\\[0.5em]
                t=3, & X_2 = \{q_1, \cdots, q_{N}\}\times S^2, & D(X_2) = Nu.\\[0.5em]
                3 \leq t < 4, & [\omega_t] = (K-(t-3)N)u+(4-t)v, & e= Nu + v.\\[0.5em]
            \end{array}
        \end{displaymath}

    \end{example}

    \begin{remark}
        Let $(M,\omega)$ be a 6-dimensional closed Hamiltonian semifree $S^1$-manifold whose fixed point set consists of $N$ surfaces.
        Then with the maximal and the minimal fixed surfaces, $N$ should be at least $2$.
        The examples with $N=2$ and $N=3$ are given in \cite{L1} and \cite{L2}.
    \end{remark}

    \begin{example}\label{sphere} This example shows that there is a 6-dimensional closed Hamiltonian semifree $S^1$-manifold $M$ whose fixed point set consists of $N$ spheres for arbitrary $N \geq 4$, with $M_{\red} = S^2 \times S^2$.
        \begin{displaymath}
            \begin{array}{lll}
                0 < t < 2, & [\omega_t] = (N+1)u+tv, & e = -v.\\[0.5em]
                t=2, & X_1 = S^2 \times\{p_1, p_2\}, & D(X_1) = 2v.\\[0.5em]
                2 \leq t < 3, & [\omega_t] = (N+1)u+(4-t)v, & e = v.\\[0.5em]
                t=3, & X_2 = \{q_1, \cdots, q_{N}\}\times S^2, & D(X_2) = Nu.\\[0.5em]
                3 \leq t < 4, & [\omega_t] = ((4-t)N+1)u+(4-t)v, & e= Nu + v.\\[0.5em]
            \end{array}
        \end{displaymath}

    \end{example}

\end{document}